 \newtheorem{theorem}{Theorem}
 \newtheorem{lemma}[theorem]{Lemma}
 \theoremstyle{definition}
 \theoremstyle{remark}
 \newcommand{\C}{\mathbb{C}}
 \newcommand{\R}{\mathbb{R}}
 \newcommand{\N}{\mathbb{N}}
 \newcommand{\hh}{\tfrac12}
 \newcommand{\dt}{\text{\rm d}t}
 \newcommand{\du}{\text{\rm d}u}
 \newcommand{\dw}{\text{\rm d}w}
 \newcommand{\dx}{\text{\rm d}x}
 \newcommand{\dk}{\text{\rm d}k}
 \newcommand{\dy}{\text{\rm d}y}
 \newcommand{\dl}{\text{\rm d}\lambda}
\newcommand{\dsigma}{\text{\rm d}\sigma}
\newcommand{\dalpha}{\text{\rm d}\alpha}
 \newcommand{\dmu}{\text{\rm d}\mu}
 \newcommand{\dnu}{\text{\rm d}\nu}
\newcommand{\Imp}{\textrm{Im}}
 \newcommand{\new}{}
  \newcommand{\sgn}{sgn}
\author[Carneiro, Chandee, and Milinovich]{Emanuel Carneiro, Vorrapan Chandee, and Micah B. Milinovich}
\address{ Instituto de Matematica Pura e Aplicada (IMPA), Estrada Dona Castorina, 110, Rio de Janeiro, RJ, 22460-320, Brazil}
\email{carneiro@impa.br}
\thanks{EC supported by CNPq grants 473152/2011-8 and 302809/2011-2. VC is supported by CRM-ISM fellowship. MBM is supported by an AMS-Simons Travel Grant and an NSA Young Investigator grant.}
\address{Department of Mathematics, Burapha University, 169 Long-Hard Bangsaen Road, Saen Sook Sub-district, Mueang District, Chonburi, 20131, Thailand}
\address{Centre de recherches math\'ematiques, Universit\'e de Montr\'eal, P.O. Box 6128, Centre-ville Station, Montreal, QC, H3C 3J7, Canada}
\email{vorrapan@gmail.com}
\address{Department of Mathematics, University of Mississippi, University, MS 38677 USA}
\email{mbmilino@olemiss.edu}
\subjclass[2000]{Primary 11M41; Secondary 11S40}
\title[Bounding $S(t)$ and $S_1(t)$]{Bounding $S(t)$ and $S_1(t)$ on the Riemann hypothesis}
\begin{document}

\maketitle

\centerline{\it To Professor Steven M. Gonek on the occasion of his sixtieth birthday.}

\vskip 0.2 in

\begin{abstract}
Let $\pi S(t)$ denote the argument of the Riemann zeta-function, $\zeta(s)$, at the point $s=\frac{1}{2}+it$. Assuming the Riemann hypothesis, we present two proofs of the bound
\begin{equation*}
|S(t)| \leq \left( \tfrac{1}{4} + o(1) \right)\tfrac{\log t}{\log \log t}
\end{equation*}
 for large $t$. This improves a result of Goldston and Gonek by a factor of 2. The first method \new{consists in bounding} the auxiliary function $S_1(t) = \int_0^{t} S( u) \> \du$ using extremal functions constructed by Carneiro, Littmann and Vaaler. We then relate the size of $S(t)$ to the size of the functions $S_1(t\pm h)-S_1(t)$ when $h\asymp 1/\log\log t$. The alternative approach bounds $S(t)$ directly, relying on the solution of the Beurling-Selberg extremal problem for the odd function $f(x) = \arctan\left(\tfrac{1}{x}\right) - \tfrac{x}{1 + x^2}$. This draws upon recent work \new{by} Carneiro and Littmann.
\end{abstract}

\section{Introduction}
Let $\zeta(s)$ denote the Riemann zeta-function and, as usual, let $N(t)$ denote the number of zeros $\rho=\beta+i\gamma$ of $\zeta(s)$ with ordinates $\gamma$ in the interval $(0,t]$. Then, for $t\geq 2$,
\begin{equation}\label{N(T)}
N(t) = \frac{t}{2\pi} \log \frac{t}{2\pi} -\frac{t}{2\pi} + \frac{7}{8} + S(t) + O\Big(\frac{1}{t}\Big), 
\end{equation}
where, if $t$ is not an ordinate of a zero of $\zeta(s)$, $S(t)$ denotes the value of $\frac{1}{\pi}\arg \zeta\big(\frac{1}{2}+it\big)$ obtained by continuous variation along the   line segments joining the points $2, 2 + it$, and $\frac{1}{2} + it$, taking the argument of  $\zeta(s)$ at $s=2$ to be zero. If $t$ is an ordinate of a zero of $\zeta(s)$ we set
\begin{equation*}\label{S(gamma)}
 S(t) = \tfrac{1}{2} \ \! \lim_{\varepsilon\to 0} \big\{ S(t\!+\! \varepsilon)\!+\!S(t\!-\! \varepsilon) \big\}.
 \end{equation*}
\smallskip 

Assuming the Riemann hypothesis (RH), Littlewood \cite{L} proved that
\begin{equation}\label{Littlewood1}
|S(t)| \leq \big(C + o(1)\big) \frac{\log t}{\log \log t}.
\end{equation}
Here $C$ is a constant and $o(1)$ denotes a quantity which tends to $0$ as $t$ grows. The order of magnitude of this estimate has never been improved, and advances have rather focused on diminishing the value of the admissible constant $C$. In \cite{RS} Ramachandra and Sankaranarayanan showed that $C=1.119$ is admissible in \eqref{Littlewood1}, and later Fujii \cite{F} obtained the result with $C=0.67$. Recently, the theory of extremal functions of exponential type has proved useful in this context. Goldston and Gonek \cite{GG}, exploring the relation between the functions $S(t)$ and $N(t)$, used the classical Beurling-Selberg majorants and minorants of characteristic functions of intervals, together with the Guinand-Weil explicit formula for the zeros of $\zeta(s)$, to obtain the bound
\begin{equation}\label{GoGo}
|S(t)| \leq \left(\frac{1}{2} + o(1)\right) \frac{\log t}{\log \log t}.
\end{equation}
\smallskip

Following Goldston and Gonek's work, Chandee and Soundararajan \cite{CS} recognized that similar techniques could be used to estimate the size of $\big|\zeta\big(\frac{1}{2}+it\big)\big|.$ To obtain their bound, they made use of the extremal functions for $f(x) = \log\big(\frac{4+x^2}{x^2}\big)$, available in the framework of Carneiro and Vaaler \cite{CV2}. Their method likely represents the limit of existing methods for bounding $\big|\zeta\big(\frac{1}{2}+it\big)\big|$ assuming RH. Later, Carneiro and Chandee \cite{CC} extended this result to bounding $|\zeta(s)|$ for $s$ in the critical strip, by using the extremal majorants and minorants for the family of functions  $f_{\alpha}(x) = \log\big(\frac{4+x^2}{(\alpha - 1/2)^2 + x^2}\big)$. This extremal problem is solvable in the the more general setting of Gaussian subordination for even functions using the work of Carneiro, Littmann, and Vaaler \cite{CLV}.
\smallskip

Inspired by these previous results, we use similar techniques to bound the auxiliary function
$$ S_1(t) := \int_0^t S(u) \ \! \du.$$
There has been some earlier work on establishing explicit bounds for $S_1(t)$. Littlewood \cite{L} was the first to prove that $ S_1(t)  \ll  \log t/(\log\log t)^2$ under the assumption of the Riemann hypothesis.
More recently, also assuming RH, Karatsuba and Korol\"{e}v \cite{KK} showed that 
\begin{equation*}
 \big| S_1(t)\big| \ \! \leq \ \! \big(40 \! +\!o(1)\big) \frac{ \log t}{ (\log\log t)^2},\\
\end{equation*}
and Fujii \cite{F} obtained the bounds
\begin{equation*}
 - \big(0.51 \!+\!o(1)\big) \frac{ \log t}{ (\log\log t)^2} \ \! \leq \ \! S_1(t) \ \! \leq \ \! \big(0.32 \! +\!o(1)\big) \frac{ \log t}{ (\log\log t)^2}.\\
\end{equation*}
In this paper, we derive upper and lower bounds for the function $ S_1(t)$ using the explicit formula and the theory of extremal functions of exponential type for the function  $f_1(x) = 1-x\arctan(1/x).$ Recent work of Carneiro, Littmann, and Vaaler \cite{CLV} allows us to choose these extremal functions in an optimal way and derive the following theorem.

\begin{theorem}\label{th2}
Assume RH. For $t$ sufficiently large we have
\begin{equation*}
 - \left(\frac{\pi}{24} \!+\!o(1)\right) \frac{ \log t}{ (\log\log t)^2} \ \! \leq \ \! S_1(t) \ \! \leq \ \! \left(\frac{\pi}{48} \! +\!o(1)\right) \frac{ \log t}{ (\log\log t)^2}\,,\\
\end{equation*} 
where the terms $o(1)$ in the above inequalities are $O(\log\log\log t/\log\log t)$.
\end{theorem}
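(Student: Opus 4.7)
\medskip
\textit{Proof plan.} The plan is to combine the Guinand--Weil explicit formula with Beurling--Selberg-type extremal majorants and minorants of exponential type constructed for the target function $f_1(x) = 1 - x\arctan(1/x)$, following the broad template of \cite{GG}, \cite{CS}, and \cite{CC}, but now adapted to the integrated argument $S_1(t)$ in place of $S(t)$ or $\log|\zeta|$.

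First, I would derive an explicit-formula identity of the shape
\[
 S_1(t) \;=\; (\text{archimedean main term})\;+\;\tfrac{1}{\Delta^2}\sum_\gamma f_1\bigl((t-\gamma)\Delta\bigr)\;-\;(\text{prime sum}),
\]
in which $\Delta>0$ is a free exponential-type parameter. The starting point is Littlewood's identity $\pi S_1(t) = \int_{1/2}^\infty \log|\zeta(\sigma+it)|\,d\sigma + \text{const}$ combined with the Hadamard expansion of $\log|\zeta|$; the contribution of a single non-trivial zero $\rho=\tfrac12+i\gamma$ reduces, after suitable regularization (subtracting the integral at $t=0$ and an integration by parts), to a multiple of $f_1(t-\gamma)$. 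That $f_1$ emerges naturally is consistent with the easily checked identity $f_1''(x) = 2/(1+x^2)^2$, which corresponds to the ``Poisson-kernel-squared'' density governing the local distribution of zeros on the critical line.

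Second, I would apply the extremal-function theory of Carneiro, Littmann, and Vaaler \cite{CLV}. The function $f_1$ is even, nonnegative, and its representation $f_1(x) = \int_0^1 u^2/(x^2+u^2)\,du$ exhibits it as a superposition of Poisson kernels and hence as a function Gaussian-subordinate to a nonnegative measure. The construction of \cite{CLV} then provides, for each $\Delta>0$, optimal entire majorants $M_\Delta \geq f_1$ and minorants $m_\Delta \leq f_1$ of exponential type $2\pi\Delta$, together with closed-form evaluations of the $L^1$-deficits $\int_\R (M_\Delta - f_1)\,dx$ and $\int_\R (f_1 - m_\Delta)\,dx$. Sandwiching $\sum_\gamma f_1((t-\gamma)\Delta)$ between the corresponding majorant and minorant sums and applying the Guinand--Weil formula (noting that $\widehat{M}_\Delta$ and $\widehat{m}_\Delta$ are supported in $[-\Delta,\Delta]$, so the resulting prime sums run only over $n \leq e^{2\pi\Delta}$) yields the desired two-sided bounds for $S_1(t)$.

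Finally, I would optimize in $\Delta$. The archimedean term contributes at size $\sim (\text{deficit})\cdot \log t$, while the prime sum, after trivial bounds on $\Lambda(n)/\sqrt{n}$, is $\ll e^{\pi\Delta}$. The balancing choice $2\pi\Delta = \log\log t - O(\log\log\log t)$ makes the prime sum of lower order, and since the \cite{CLV} deficits for $f_1$ are of order $c/\Delta^2$ with explicit constants, this produces $|S_1(t)| \leq (c+o(1))\log t/(\log\log t)^2$ with $o(1) = O(\log\log\log t/\log\log t)$. The asymmetry between the majorant and minorant extremal problems for the nonnegative $f_1$---where the majorant deficit exceeds the minorant deficit by exactly a factor of $2$---is what produces the ratio $\pi/24 : \pi/48$ between the asymptotic constants in the theorem.

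The main obstacle will be the first step: carefully deriving the explicit formula linking $S_1(t)$ to $\sum_\gamma f_1(\gamma-t)$, properly tracking the pole of $\zeta$ at $s=1$, the trivial zeros, and the regularization required to turn individually divergent archimedean integrals into the convergent kernel $f_1$. Once this identity is in hand, the Guinand--Weil computation and the optimization of $\Delta$ are routine applications of the extremal-function machinery, and the precise constants $\pi/24$ and $\pi/48$ fall out of the closed-form deficits supplied by \cite{CLV}.
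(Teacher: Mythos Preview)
Your plan is essentially the paper's proof: represent $S_1(t)$ via $f_1(x)=1-x\arctan(1/x)$, sandwich $f_1$ by the Carneiro--Littmann--Vaaler extremal functions of type $2\pi\Delta$, apply Guinand--Weil, and optimize in $\Delta$; the $2{:}1$ asymmetry in the majorant/minorant deficits is exactly what yields $\pi/24$ versus $\pi/48$. Two small corrections to keep the constants right: the basic identity is simply $S_1(t)=\tfrac{1}{4\pi}\log t-\tfrac{1}{\pi}\sum_\gamma f_1(t-\gamma)+O(1)$ with no $\Delta$ and no prime sum (those enter only after you replace $f_1$ by $g_\Delta^\pm$ and feed $\sum_\gamma g_\Delta^\pm(t-\gamma)$ into Guinand--Weil), and the correct balance is $\pi\Delta=\log\log t - c\log\log\log t$, not $2\pi\Delta\sim\log\log t$; your choice would inflate both constants by a factor of $4$.
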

 
As a consequence of the above theorem, we are able to improve Goldston and Gonek's bound for $S(t)$ by a factor of 2.

\begin{theorem}\label{th1}
Assume RH. For $t$ sufficiently large we have
\begin{equation*}
|S(t)| \leq \frac{1}{4} \frac{\log t}{\log \log t} + O\left(\frac{\log t \log \log \log t}{(\log \log t)^2}\right).
\end{equation*}
\end{theorem}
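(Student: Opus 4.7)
The plan is to deduce Theorem~\ref{th1} directly from Theorem~\ref{th2}, following the strategy outlined in the abstract: since $S_1'(t) = S(t)$ at points of continuity, an $L^\infty$ estimate on $S$ may be recovered from estimates on the short differences $S_1(t+h) - S_1(t)$ and $S_1(t) - S_1(t-h)$, provided one has some one-sided control on how fast $S$ varies. Such control is available directly from \eqref{N(T)}: writing $g(t) := \frac{t}{2\pi}\log\frac{t}{2\pi} - \frac{t}{2\pi} + \frac{7}{8}$ for the smooth main term, the formula reads $S(t) = N(t) - g(t) + O(1/t)$, and the monotonicity of the counting function $N(t)$ forces $S(t) + g(t)$ to be essentially non-decreasing.

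First I would translate this into a workable pointwise inequality. For $h>0$ and $u \in [t, t+h]$, the monotonicity yields $S(u) \geq S(t) - \bigl(g(u) - g(t)\bigr) + O(1/t)$. Integrating over $u$ from $t$ to $t+h$ and computing the integral of $g(u) - g(t)$ via $g'(u) = \frac{1}{2\pi}\log(u/2\pi)$, one obtains
\begin{equation*}
S_1(t+h) - S_1(t) \;\geq\; h\,S(t) \;-\; \frac{h^2}{4\pi}\log\frac{t}{2\pi}\bigl(1+o(1)\bigr) \;+\; O(h/t).
\end{equation*}
Applying the upper bound for $S_1(t+h)$ and the absolute value of the lower bound for $S_1(t)$ given by Theorem~\ref{th2}, the left-hand side is at most $\bigl(\tfrac{\pi}{48} + \tfrac{\pi}{24} + o(1)\bigr)\frac{\log t}{(\log\log t)^2} = \bigl(\tfrac{\pi}{16} + o(1)\bigr)\frac{\log t}{(\log\log t)^2}$, which rearranges to
\begin{equation*}
S(t) \;\leq\; \frac{\pi}{16\, h}\cdot \frac{\log t}{(\log\log t)^2} \;+\; \frac{h}{4\pi}\log t \cdot \bigl(1+o(1)\bigr).
\end{equation*}
The symmetric argument on the interval $[t-h, t]$, using the reverse inequality $S(u) \leq S(t) + (g(t)-g(u)) + O(1/t)$ for $u \leq t$, gives the matching lower bound on $S(t)$.

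The remaining step is to optimize in $h$. Setting $h = \pi/(2\log\log t)$ balances the two terms on the right: each contributes $\frac{1}{8}\frac{\log t}{\log\log t}$, for a total of $\frac{1}{4}\frac{\log t}{\log\log t}$. The main obstacle, beyond correctly setting up the one-sided monotonicity, is verifying that the secondary errors---the $O(\log\log\log t/\log\log t)$ correction coming from Theorem~\ref{th2} together with the $O(h/t)$ and $O(h^2\log t/t)$ contributions from the smoothing---combine to produce exactly the stated $O\bigl(\log t\,\log\log\log t/(\log\log t)^2\bigr)$ error. No new extremal function input is required for this deduction; the entire numerical gain over the Goldston--Gonek bound \eqref{GoGo} comes from the sharp constants in Theorem~\ref{th2} combined with the exact balance $2\sqrt{(\pi/16)\cdot(1/(4\pi))} = 1/4$ produced by the optimization in $h$.
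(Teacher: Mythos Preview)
Your proposal is correct and coincides with the paper's first proof (Section~4): the paper isolates the monotonicity step as Lemma~\ref{lemma1}, obtaining $S(t) \le \frac{h\log t}{4\pi} + \frac{1}{h}\bigl(S_1(t+h)-S_1(t)\bigr) + O(h)+O(1/t)$ from $N(t+\nu)-N(t)\ge 0$ exactly as you do, then feeds in Theorem~\ref{th2} to get the $\tfrac{\pi}{16}$ constant and optimizes at $h=\pi/(2\log\log t)$. The error tracking you flag as the remaining obstacle works out precisely as stated.
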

\smallskip

Previously, it had been suggested that the estimate in (\ref{GoGo}) had attained the limit  of existing methods for bounding $S(t).$ However, the result in the above theorem is actually not too surprising since Goldston and Gonek derived their inequality for $S(t)$ from the bound 
\begin{equation} \label{StplushMinusSt}
\displaystyle{|S(t\!+\!h)-S(t)| \leq \left(\frac{1}{2}\!+\!o(1)\right) \frac{\log t}{\log\log t}},
\end{equation}
where $0 < h \leq \sqrt{t},$ instead of by bounding $S(t)$ directly. Given the range of uniformity of this estimate, it is reasonable to believe that there is a $t$ and an $h$ such that simultaneously $S(t)$ is large and positive, and $S(t + h)$ is large and negative (or vice versa) for some $h$ satisfying $0<h\leq \sqrt{t}$. Hence, the estimate for $|S(t+h)-S(t)|$  in (\ref{StplushMinusSt}) suggests the pointwise bound $|S(t)|\leq (\frac{1}{4}+o(1))\log t/\log\log t$ provided by Theorem \ref{th1}. Moreover, as a consequence of Theorem \ref{th1}, we are able to rederive  (but not improve) the bound in (\ref{StplushMinusSt}). This is an important observation since Goldston and Gonek showed how to use (\ref{StplushMinusSt}) to bound the maximum multiplicity of a zero of $\zeta(s)$ and how to bound the maximum gap between consecutive zeros of the Riemann zeta-function on the critical line (see \cite[Corollary 1]{GG}). Hence, Theorem \ref{th1} can be used to recover these results about the zeros of $\zeta(s)$, but not to improve them.
\smallskip

In Section 4 of this paper, we will show how to obtain the bound for $S(t)$ in Theorem \ref{th1}  by relating the size of $S(t)$ to the size of the functions $ S_1(t\pm h)- S_1(t)$ for $h \asymp 1/\log\log t$ and then invoking Theorem \ref{th2}. This is reminiscent of the work of Selberg \cite{Selberg}, Ghosh \cite{Ghosh}, and Tsang \cite{Tsang}  who used the behavior of the function $ S_1(t+h)- S_1(t)$ to prove omega-theorems for $S(t)$ and to study the sign changes of $S(t)$.
\smallskip

Although the method described above can be used to improve existing bounds for $S(t)$, this approach does not bound $S(t)$ directly.  Rather, it derives the bound for $S(t)$ from a corresponding bound on $S_1(t)$ which in turn arises from an application of the theory of extremal functions to $f_1(x) = 1-x\arctan(1/x).$  A natural question to ask is whether the theory of extremal functions of exponential type could be applied directly to bound $S(t).$ It turns out that a direct approach is possible and, in Section \ref{Sec:StviaExtremal}, we show how to obtain the bound for $S(t)$ in Theorem \ref{th1} via extremal functions.  However, in constrast with previous work on optimal bounds in the theory of $\zeta(s)$, this approach to bound $S(t)$ is connected with the Beurling-Selberg extremal problem for an {\it odd function}, namely
\begin{equation*}
f(x) = \arctan\left(\frac{1}{x}\right) - \frac{x}{1 + x^2}.
\end{equation*}
It is shown in Section \ref{Sec:StviaExtremal} that the solution of this particular problem is provided by the recent Gaussian subordination framework for truncated (and odd) functions of Carneiro and Littmann \cite{CL}.
\smallskip

The bound for $S(t)$ via the extremal function approach is the same as the one deduced from the estimate of $ S_1(t\pm h)- S_1(t)$. This should be expected since these two parallel approaches use essentially the same methods, and the same assumptions.  The main assumption is that all non-trivial zeros of $\zeta(s)$ lie on the critical line, since the method depends on an explicit formula which relates information on the zeta-function to the zeros of $\zeta(s)$ and the prime numbers.  The theory of extremal functions is used to majorize certain naturally occuring test functions. Note that we use the the function $f(x) = \arctan\left(\frac{1}{x}\right) - \frac{x}{1 + x^2} =- \frac{d}{dx} f_1(x)$ to bound $S(t)$ where $f_1(x) = 1 - x\arctan\left(\frac{1}{x}\right)$ is the function used to bound $S_1(t)$. Since $\frac{d}{dt}S_1(t)=S(t)$ almost everywhere, it is not surprising that the two methods lead to the same result (namely Theorem \ref{th1}).
\smallskip

Although the inequality in Theorem \ref{th1} appears to be the best known bound for $S(t)$ assuming the Riemann hypothesis, the true size of $S(t)$ is perhaps much smaller.  For instance, a heuristic argument of Farmer, Gonek and Hughes \cite{FGH} suggests that $S(t) = O(\sqrt{\log t \log\log t}).$ This result seems unattainable using the present method without a significant breakthrough in estimating certain prime number sums that arise when applying the Guinand-Weil explicit formula. In the present article, these sums are estimated trivially.

%%%%%%%%%%%  Lemmas  %%%%%%%%%%%%%%%%%%%%%

\section{Preliminary results}
Before proving Theorem \ref{th2} and Theorem \ref{th1}, we collect some preliminary results. As mentioned in the introduction, our bounds for $ S_1(t)$ require extremal majorants and minorants for the function $f_1(x) = 1 - x\arctan(1/x) $. The connection between $S_1(t)$ and $f_1(x)$ is made explicit in the following lemma. 

\begin{lemma}\label{lemmafSt} Assume RH.  Let 
\begin{equation*} 
f_1(x) = 1 - x\arctan\left(\frac{1}{x}\right).
\end{equation*}
Then, for $t\geq 2$, we have
\begin{equation} \label{eqn:fForSt}
S_1(t) = \frac{1}{4\pi} \log t - \frac{1}{\pi} \sum_{\gamma} f_1(t\!-\!\gamma) + O(1),
\end{equation}
where the sum runs over the non-trivial zeros $\rho = \frac{1}{2}+i\gamma$ of $\zeta(s)$.
\end{lemma}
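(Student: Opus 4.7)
My plan is to establish (\ref{eqn:fForSt}) by combining Littlewood's integral representation of $S_1(t)$ with the Hadamard factorization of $\xi(s)$, following the strategy Chandee and Soundararajan \cite{CS} used for their formula for $\log|\zeta(\tfrac12+it)|$.

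First, Littlewood's lemma applied to $\log\zeta$ on the rectangle $[\tfrac12,R]\times[0,t]$ yields, under RH (since the interior contains no zeros) and after letting $R\to\infty$,
\[
\pi S_1(t) = \int_{1/2}^\infty \log|\zeta(\sigma+it)|\,d\sigma + O(1),
\]
the $O(1)$ absorbing the horizontal edges and the $\sigma=R$ edge (using $\log|\zeta(\sigma+it)| = O(2^{-\sigma})$ for $\sigma$ large). From the definition of $\xi$ I then write
\[
\log|\zeta(s)| = \log|\xi(s)| - \log\!\left|\tfrac{s(s-1)}{2}\right| + \tfrac{\sigma}{2}\log\pi - \log|\Gamma(s/2)|,
\]
and under RH the Hadamard factorization gives $\log|\xi(\sigma+it)| = \log\xi(\tfrac12) + \sum_{\gamma>0}\log|1+(s-\tfrac12)^2/\gamma^2|$, with absolute convergence coming from the natural pairing of $\rho$ and $\bar\rho$.

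The main step is the explicit evaluation of each zero contribution. Using the factorization $(\tau+it)^2+\gamma^2 = (\tau+i(t-\gamma))(\tau+i(t+\gamma))$ together with the elementary identity $\int_0^R\log(\tau^2+a^2)\,d\tau = R\log(R^2+a^2) - 2R + 2|a|\arctan(R/|a|)$, one evaluates $\int_0^{R-1/2}\log|(\tau+it)^2+\gamma^2|\,d\tau$ in closed form. Sending $R\to\infty$, the divergent-in-$R$ pieces of the zero sum (of type $2R'\log(R'/\gamma) - 2R'$) cancel against the corresponding divergent pieces from the Stirling expansion of $\log|\Gamma(s/2)|$ and the $\log|s(s-1)/2|$, $\tfrac{\sigma}{2}\log\pi$ integrals; the latter simultaneously produces the deterministic main term of size $(1/(4\pi))\log t + O(1)$. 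The remaining finite piece from each zero $\gamma$ then organizes, via the elementary identity $f_1(x) = 1 - (\pi/2)|x| + x\arctan(x)$ (equivalent to $\arctan(1/x) = (\pi/2)\operatorname{sgn}(x) - \arctan(x)$), into $(1/\pi)[f_1(t-\gamma) + f_1(t+\gamma)]$.

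Assembling the pieces and using that $\sum_\gamma f_1(t-\gamma) = \sum_{\gamma>0}[f_1(t-\gamma) + f_1(t+\gamma)]$ (with the sum running over all nontrivial zeros, positive and negative ordinates paired) yields the claimed formula. The chief technical obstacle is the careful bookkeeping of the divergent-in-$R$ cancellations between the zero sum and the Stirling piece: each term individually diverges, and the divergences must be tracked to sufficient precision so as to isolate the $(1/(4\pi))\log t$ main term and the $(1/\pi)$ coefficient of the $f_1$-sum correctly. This parallels the manipulation in \cite{CS} for $\log|\zeta(\tfrac12+it)|$, but with the additional integration over $\sigma$ and a more delicate Stirling-type cancellation.
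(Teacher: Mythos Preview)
Your plan can be made to work, but it takes a substantially harder route than the paper's, and your stated parallel with \cite{CS} is off: Chandee--Soundararajan use exactly the normalization device that you omit.

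The paper starts from the \emph{finite} integral $S_1(t)=\frac{1}{\pi}\int_{1/2}^{3/2}\log|\zeta(\sigma+it)|\,\dsigma+O(1)$ (Titchmarsh, Theorem~9.9) and then divides $|\xi(\sigma+it)|$ by $|\xi(\tfrac32+it)|$ before invoking the Hadamard product. Under RH this yields
\[
\log|\zeta(\sigma+it)|=\Bigl(\tfrac34-\tfrac{\sigma}{2}\Bigr)\log t-\frac12\sum_\gamma\log\frac{1+(t-\gamma)^2}{(\sigma-\tfrac12)^2+(t-\gamma)^2}+O(1),
\]
an absolutely convergent zero sum whose terms are \emph{non-negative} for $\tfrac12\le\sigma\le\tfrac32$. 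Monotone convergence then justifies the interchange of sum and integral in one line; the $(\tfrac34-\tfrac{\sigma}{2})\log t$ piece integrates to $\tfrac14\log t$, and the $\sigma$-integral of each zero term is the closed form $f_1(t-\gamma)=\tfrac12\int_{1/2}^{3/2}\log\frac{1+x^2}{(\sigma-1/2)^2+x^2}\,\dsigma$. No $R\to\infty$ bookkeeping is needed at all.

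Your route instead integrates to infinity, expands $\log|\xi|$ unnormalized, and must match the divergent pieces $\sum_{\gamma>0}\bigl(2R'\log(R'/\gamma)-2R'+\cdots\bigr)$ against the Stirling asymptotic of $\int_{1/2}^{R}\log|\Gamma((\sigma+it)/2)|\,\dsigma$. This cancellation is real (it is essentially the Riemann--von Mangoldt formula integrated once in $\sigma$) and can be carried out, but the ``chief technical obstacle'' you correctly flag is precisely what the normalization by $\xi(\tfrac32+it)$ eliminates. That normalization is also what \cite{CS} actually do for $\log|\zeta(\tfrac12+it)|$: the paper's displayed formula for $\log|\zeta(\sigma+it)|$ specialized to $\sigma=\tfrac12$ is their starting identity. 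So if you intend to follow \cite{CS}, the paper's argument \emph{is} that strategy; what you have outlined is a different and more laborious path to the same endpoint.
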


\begin{proof}
By \cite[Theorem 9.9] {T} we have
\begin{equation}\label{eqn:mainS2}
 S_1(t) = \frac{1}{\pi} \int_{1/2}^{3/2}  \log\big|\zeta(\sigma+it)\big| \ \! \dsigma + O(1).
 \end{equation}
In order to prove the lemma, we replace the integrand by an absolutely convergent sum over the zeros of $\zeta(s)$ and then integrate term-by-term.
\smallskip

Let $\xi(s) = \frac{1}{2}s(s-1)\pi^{-s/2}\Gamma(\frac{s}{2})\zeta(s)$ denote Riemann's $\xi$-function. Then $\xi(s)$ is an entire function of order 1 and the zeros of $\xi(s)$ correspond to the non-trivial zeros of $\zeta(s)$. By Hadamard's factorization formula (cf. \cite[Chapter 12]{Dav}), we have
$$ \xi(s) = e^{A+Bs} \prod_\rho \left(1-\frac{s}{\rho} \right) e^{s/\rho},$$
where $\rho$ runs over the non-trivial zeros of $\zeta(s)$, $A$ is a constant and $B = -\sum_\rho \text{Re}(1/\rho)$. Note that Re($1/\rho)$ is positive and that $\sum_\rho \text{Re}(1/\rho)$ converges absolutely. Assuming the Riemann hypothesis, it follows that
$$ \left|\frac{\xi(\sigma+it)}{\xi\big(\frac{3}{2}+it\big)}\right| = \prod_\rho \left( \frac{\big(\sigma\!-\!\frac{1}{2}\big)^2+(t\!-\!\gamma)^2}{1+(t\!-\!\gamma)^2} \right)^{\frac{1}{2}}.$$
Hence
$$ \log\big|\xi(\sigma+it)\big| - \log\big|\xi\big(\tfrac{3}{2}+it\big)\big| = \frac{1}{2} \sum_\rho \log\left(  \frac{\big(\sigma\!-\!\frac{1}{2}\big)^2+(t\!-\!\gamma)^2}{1+(t\!-\!\gamma)^2}  \right).$$
By Stirling's formula for $\Gamma(s)$, we find that
\begin{equation}\label{eqn:mainS3}
 \log\big|\zeta(\sigma+it)\big| = \left( \tfrac{3}{4}-\tfrac{\sigma}{2} \right) \log t - \frac{1}{2} \sum_\rho \log\left(  \frac{1+(t\!-\!\gamma)^2} {\big(\sigma\!-\!\frac{1}{2}\big)^2+(t\!-\!\gamma)^2} \right) +O(1)
  \end{equation}
 uniformly for $\frac{1}{2}\leq \sigma \leq \frac{3}{2}$ and $t\ge 2$, say. Moreover, for $\frac{1}{2}\leq \sigma \leq \frac{3}{2}$ and $t\ge 1$, if $\sigma +it$ is not a zero of $\zeta(s)$ then (\ref{N(T)}) implies that  the sum over zeros on the right-hand side of \eqref{eqn:mainS3} converges absolutely (one can also deduce this from the fact that the product in Hadamard's factorization converges absolutely to a non-zero value). In case when $\sigma +it$ corresponds to a zero of $\zeta(s)$, both sides of \eqref{eqn:mainS3} are $-\infty$. Inserting (\ref{eqn:mainS3}) into (\ref{eqn:mainS2}), we deduce that
\begin{equation*}\label{eqn:mainS1}
\begin{split}
 S_1(t) &= \frac{1}{\pi} \int_{1/2}^{3/2} \left( \tfrac{3}{4}-\tfrac{\sigma}{2} \right) \log t  \ \! \dsigma  
 \\
 & \quad\quad\quad - \frac{1}{2\pi} \int_{1/2}^{3/2} \sum_\rho \log\left(  \frac{1+(t\!-\!\gamma)^2} {\big(\sigma\!-\!\frac{1}{2}\big)^2+(t\!-\!\gamma)^2} \right) \ \! \dsigma + O(1)
 \\
 &= \frac{1}{4\pi} \log t - \frac{1}{\pi} \sum_{\rho} f_1(t-\gamma) + O(1),
 \end{split}
 \end{equation*}
where the function $f_1(x)$ is defined by
\begin{equation*} \label{eqn:f_interm_integralLog}
 f_1(x) = \frac{1}{2} \int_{1/2}^{3/2} \log\left(  \frac{1+x^2}{\big(\sigma\!-\!\frac{1}{2}\big)^2+x^2} \right) \dsigma = 1 - x\arctan\left(\frac{1}{x}\right),
\end{equation*}
and the interchange of the integral and the sum is justified by monotone convergence since the terms involved are non-negative. This completes the proof of the lemma.
\end{proof}

\noindent{\it Remark.} In the proof of the above lemma, the first step is to divide $|\xi(\sigma+it)|$ by $|\xi(\tfrac{3}{2}+it)|$. This simplifies matters when applying Stirling's formula and ensures that the resulting sum over zeros of $\zeta(s)$ is absolutely convergent and contains only non-negative terms, which justifies the interchange of the integrals. A similar trick is necessary in the proof of Lemma \ref{S(t)sum}, below. In particular, we need to subtract a factor of $\zeta'(3/2+it)/\zeta(3/2+it)$ in a certain integral representation of $S(t)$ in order to ensure the absolute convergence of the resulting sum over zeros of $\zeta(s)$. %Such ideas are classical, appearing (for instance) in Davenport's exposition \cite{Dav} of the Prime Number Theorem.
\bigskip

In Section \ref{Sec:StviaExtremal}, we bound $S(t)$ directly using extremal functions. This requires an expression relating $S(t)$ to the function $f(x)=\arctan\left(\frac{1}{x}\right)-\frac{x}{1+x^2}$ mentioned in the introduction. Our next lemma expresses $S(t)$ as a sum of $f(x)$ over the imaginary parts of the zeros of the zeta-function plus a bounded error term.

\begin{lemma} \label{S(t)sum}
Assume RH. Let 
$$\displaystyle{f(x) = \arctan\left(\frac{1}{x}\right) - \frac{x}{1 + x^2} }.$$ 
Then, for $t\geq 2$ and $t$ not coinciding with an ordinate of a zero of $\zeta(s)$, we have
\begin{equation}\label{Sec2.6.0}
S(t) = \frac{1}{\pi}  \sum_{\gamma} f(t-\gamma) + O(1),
\end{equation}
where the sum runs over the non-trivial zeros $\rho=\frac{1}{2}+i\gamma$ of $\zeta(s)$.
\end{lemma}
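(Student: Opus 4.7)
The strategy is to follow the pattern of the proof of Lemma \ref{lemmafSt}, replacing $\log|\zeta|$ by an imaginary part and producing the kernel $f$ from a $\sigma$-integral rather than from the integral that produced $f_1$. I would start from the standard representation
$$\pi S(t) \;=\; -\int_{1/2}^{3/2} \mathrm{Im}\,\frac{\zeta'}{\zeta}(\sigma+it)\,\dsigma + O(1),$$
which follows from writing $\pi S(t)=\arg\zeta(\tfrac12+it)$ as a line integral along the horizontal segment from $2+it$ to $\tfrac12+it$ (the piece with $\sigma\geq 3/2$ contributing $O(1)$ via the absolutely convergent Dirichlet series for $\zeta'/\zeta$).

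The essential move, flagged in the remark after Lemma \ref{lemmafSt}, is to subtract the bounded quantity $\mathrm{Im}\,\frac{\zeta'}{\zeta}(\tfrac32+it) = O(1)$ inside the integrand before expanding over zeros. Using Hadamard's factorization for $\xi(s)$ together with Stirling's formula for the gamma factor (the analog of \eqref{eqn:mainS3}), I would derive
$$\frac{\zeta'}{\zeta}(\sigma+it) - \frac{\zeta'}{\zeta}(\tfrac{3}{2}+it) \;=\; \sum_\rho\left(\frac{1}{\sigma+it-\rho} - \frac{1}{\tfrac{3}{2}+it-\rho}\right) + O(1),$$
uniformly for $1/2\leq\sigma\leq 3/2$ and $t\geq 2$. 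The subtraction makes each summand $O((t-\gamma)^{-2})$ rather than $O((t-\gamma)^{-1})$, so \eqref{N(T)} guarantees absolute convergence of the double series that will arise.

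After taking imaginary parts and interchanging the sum with the integration over $\sigma$ (justified by the absolute convergence just secured), the proof reduces to a direct computation: for $\rho = \tfrac12 + i\gamma$,
$$-\int_{1/2}^{3/2} \mathrm{Im}\!\left[\frac{1}{\sigma+it-\rho} - \frac{1}{\tfrac{3}{2}+it-\rho}\right]\dsigma \;=\; \arctan\!\left(\frac{1}{t-\gamma}\right) - \frac{t-\gamma}{1+(t-\gamma)^2} \;=\; f(t-\gamma),$$
using $\int_0^1 \frac{du}{u^2 + (t-\gamma)^2} = \frac{1}{t-\gamma}\arctan\!\left(\frac{1}{t-\gamma}\right)$ (which is well defined and even in the relevant sign). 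Summing over zeros and dividing by $\pi$ yields \eqref{Sec2.6.0}, and the relation $f = -f_1'$ noted in the introduction is reflected in the fact that this integrand is exactly the $\sigma$-derivative of the integrand that produced $f_1$ in Lemma \ref{lemmafSt}.

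The main obstacle is establishing the uniform $O(1)$ error term in the partial-fraction identity above: the gamma-factor contribution $\tfrac12\bigl[\tfrac{\Gamma'}{\Gamma}(\tfrac{s}{2}+1) - \tfrac{\Gamma'}{\Gamma}(\tfrac{7}{4}+\tfrac{it}{2})\bigr]$, along with the $\frac{1}{s-1}$ pole contribution and the Hadamard constant $B$, must be controlled by Stirling uniformly on the critical strip, and one must verify that the hypothesis that $t$ is not the ordinate of a zero prevents any genuine divergence at $\sigma=\tfrac12$. Once this uniform estimate is available, the remainder of the argument is a straightforward variant of the proof already carried out for $S_1(t)$.
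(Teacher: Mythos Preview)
Your proposal is correct and follows essentially the same approach as the paper's proof: both start from the integral representation $S(t)=-\tfrac{1}{\pi}\int_{1/2}^{3/2}\mathrm{Im}\,\tfrac{\zeta'}{\zeta}(\sigma+it)\,\dsigma+O(1)$, subtract the bounded term $\mathrm{Im}\,\tfrac{\zeta'}{\zeta}(\tfrac32+it)$ to make the zero sum absolutely convergent, invoke the partial-fraction expansion with Stirling for the gamma factor, interchange sum and integral, and evaluate the resulting $\sigma$-integral to obtain $f(t-\gamma)$. The only cosmetic difference is that the paper cites dominated convergence via $f(x)=O(x^{-3})$ for the interchange while you cite absolute convergence directly; the content is the same.
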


\begin{proof} For $t$ not coinciding with an ordinate of a zero of $\zeta(s)$, we have 
$$S(t) = - \frac{1}{\pi} \int_{\frac{1}{2}}^\infty \Imp \frac{\zeta'}{\zeta} (\sigma + it) \,\dsigma = \frac{1}{\pi}  \int_{\frac{3}{2}}^{\frac12}  \Imp \frac{\zeta'}{\zeta} (\sigma + it) \,\dsigma + O(1).$$
We now replace the integrand on the right-hand side of the expression above by an absolutely convergent sum over the non-trivial zeros of $\zeta(s)$.
\smallskip

Let $s=\sigma+it$. If $s$ is not a zero of $\zeta(s)$, then the partial fraction decomposition for $\zeta'(s)/\zeta(s)$ (cf. \cite[Chapter 12]{Dav}) and Stirling's formula for $\Gamma'(s)/\Gamma(s)$ imply that
\begin{equation} \label{partialfrac}
\begin{split}
 \frac{\zeta'}{\zeta}(s) &= \sum_\rho \left( \frac{1}{s-\rho}+\frac{1}{\rho} \right) - \frac{1}{2}\frac{\Gamma'}{\Gamma}\left(\frac{s}{2}+1\right) + O(1)
 \\
 &=  \sum_\rho \left( \frac{1}{s-\rho}+\frac{1}{\rho} \right) - \frac{1}{2} \log \left(\frac{t}{2}\right) + O(1)
 \end{split}
 \end{equation}
uniformly for $\frac{1}{2}\leq\sigma \leq \frac{3}{2}$ and $t\geq 2$, where the sum runs over the non-trivial zeros $\rho$ of $\zeta(s)$.  
Now suppose that $t$ is not the ordinate of a zero of $\zeta(s)$. Then, from (\ref{partialfrac}) and the Riemann hypothesis, it follows that
\begin{align*}\label{Sec2.4}
\begin{split}
S(t) = \frac{1}{\pi} & \int_{\frac{3}{2}}^{\frac12}  \Imp \frac{\zeta'}{\zeta} (\sigma + it) \,\dsigma + O(1)\\
& =  \frac{1}{\pi} \int_{\frac32}^{\frac12} \Imp\left( \frac{\zeta'}{\zeta} (\sigma + it) - \frac{\zeta'}{\zeta} \big(\tfrac{3}{2} + it\big)\right)  \dsigma + O(1)\\
& \ \ \ \ \ \ = \frac{1}{\pi} \int_{\frac12}^{\frac32} \sum_{\gamma}\left\{ \frac{(t - \gamma)}{(\sigma - \hh)^2 + (t - \gamma)^2} - \frac{(t - \gamma)}{1 + (t - \gamma)^2}\right\} \dsigma+ O(1)\\
& \ \ \ \ \ \ \ \ \ \ \ = \frac{1}{\pi}\sum_{\gamma} \int_{\frac12}^{\frac32} \left\{ \frac{(t - \gamma)}{(\sigma - \hh)^2 + (t - \gamma)^2} - \frac{(t - \gamma)}{1 + (t - \gamma)^2}\right\} \dsigma+ O(1)\\
& \ \ \ \ \ \ \ \ \ \ \ \ \ \ \ \ \ = \frac{1}{\pi} \sum_{\gamma} \left\{ \arctan \left( \frac{1}{t-\gamma}\right) - \frac{(t-\gamma)}{1 + (t - \gamma)^2}\right\} + O(1)
\\
& \ \ \ \ \ \ \ \ \ \ \ \ \ \ \ \ \ \ \ \ \ = \frac{1}{\pi}  \sum_{\gamma} f(t-\gamma) + O(1),
\end{split}
\end{align*}
where the interchange of the integral and the sum is justified by dominated convergence since $f(x)=O(x^{-3})$. 
This proves the lemma.
\end{proof}

The main idea in the proofs of Theorem \ref{th2} and Theorem \ref{th1} 
is to bound $S_1(t)$ and $S(t)$ from above and below by replacing the function $f_1(x)$ in \eqref{eqn:fForSt} and $f(x)$ in \eqref{Sec2.6.0} by an appropriate majorant or minorant of exponential type (thus with a compactly supported Fourier transform by the Paley-Wiener theorem). We then apply the following version of the Guinand-Weil explicit formula which connects these sums over the zeros of the zeta-function to sums of the Fourier transforms evaluated at the prime powers. 

\begin{lemma}
\label{lem:explicitformula} Assume RH.
Let $h(s)$ be analytic in the strip $|\text{\rm Im }s| \leq 1/2 + \varepsilon$ for some $\varepsilon > 0$, 
and assume that $|h(s)| \ll (1+|s|)^{-(1+\delta)}$ for 
some $\delta >0$ when $|\text{\rm Re }s|\to \infty$.  Let $h(w)$ be a real-valued for real $w$, and set $\widehat{h}(x) = \int_{-\infty}^{\infty} h(w) e^{-2\pi i x w} \> \dw$. 
Then 
\begin{align*}
\sum_{\rho} h(\gamma) &= h\left(\frac{1}{2i} \right) +h\left(-\frac{1}{2i}\right) 
- \frac{1}{2\pi} {\widehat h}(0) \log \pi  + \frac{1}{2 \pi}  \int_{-\infty}^{\infty} h(u) \ \! \text{\rm Re }\frac{\Gamma '}{\Gamma}\left(\frac{1}{4}   + \frac{iu}{2} \right) \> \du \\
& \ \ \ \ \ \ \ \ \ \ \ \ \ - \frac{1}{2\pi}\sum_{n=2}^{\infty} \frac{\Lambda(n)}{\sqrt{n}}\left(\widehat{h}\left( \frac{\log n}{2\pi }\right) + \widehat{h}\left( \frac{-\log n}{2\pi }\right)\right),
\end{align*}
where $\Gamma'/\Gamma$ is the logarithmic derivative of the gamma function, and $\Lambda(n)$ is the von Mangoldt function defined to be $\log p$, if $n=p^m$, $p$ a prime and $m\geq 1$ an integer, and zero otherwise.
\end{lemma}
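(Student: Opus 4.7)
The plan is to apply the residue theorem to the meromorphic function $(\xi'/\xi)(s)\,H(s)$, where $H(s):=h((s-\tfrac12)/i)$ is the pullback of $h$ to the critical strip, so that $H(\tfrac12+i\gamma)=h(\gamma)$ at each non-trivial zero of $\zeta$. Analyticity of $h$ in $|\Imp s|\leq \tfrac12+\varepsilon$ corresponds to analyticity of $H$ in the vertical strip $-\varepsilon\leq \Rep s\leq 1+\varepsilon$. I would integrate $(\xi'/\xi)(s)\,H(s)$ around a rectangular contour with vertices $c\pm iT$ and $1-c\pm iT$, where $c=1+\varepsilon'$ for some $0<\varepsilon'<\varepsilon$. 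Since the poles of $\xi'/\xi$ inside this strip are exactly the non-trivial zeros of $\zeta$, with residues equal to their multiplicities, the residue theorem yields $\sum_{|\gamma|<T}h(\gamma)$. Letting $T\to\infty$ along a sequence avoiding ordinates of zeros, the polynomial decay of $h$ together with standard bounds on $\xi'/\xi$ in the critical strip kills the two horizontal segments, leaving
\[
\sum_\rho h(\gamma)=\frac{1}{2\pi i}\int_{(c)}\frac{\xi'}{\xi}(s)\,H(s)\,ds-\frac{1}{2\pi i}\int_{(1-c)}\frac{\xi'}{\xi}(s)\,H(s)\,ds.
\]

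Next, I would apply the functional equation $\xi(s)=\xi(1-s)$, which gives $(\xi'/\xi)(1-s)=-(\xi'/\xi)(s)$, and change variables $s\mapsto 1-s$ in the second integral to fold everything onto the line $(c)$:
\[
\sum_\rho h(\gamma)=\frac{1}{2\pi i}\int_{(c)}\frac{\xi'}{\xi}(s)\,\bigl[H(s)+H(1-s)\bigr]\,ds.
\]
Expanding the logarithmic derivative of $\xi(s)=\tfrac12 s(s-1)\pi^{-s/2}\Gamma(s/2)\zeta(s)$ as
\[
\frac{\xi'}{\xi}(s)=\frac{1}{s}+\frac{1}{s-1}-\frac{\log\pi}{2}+\frac{1}{2}\frac{\Gamma'}{\Gamma}\!\left(\frac{s}{2}\right)+\frac{\zeta'}{\zeta}(s),
\]
I would handle the five pieces separately, in each case shifting the contour from $(c)$ to the critical line $(\tfrac12)$. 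For the $\zeta'/\zeta$ piece, substituting the Dirichlet series $-\zeta'/\zeta(s)=\sum_n \Lambda(n)n^{-s}$ (absolutely convergent on $(c)$) and swapping sum with integral by Fubini, the integral against $n^{-s}H(s)$ shifted to $(\tfrac12)$ becomes (parameterizing $s=\tfrac12+iu$) $i\widehat h(\log n/(2\pi))/\sqrt n$; the companion integral against $n^{-s}H(1-s)$, after the further change of variable $s\mapsto 1-s$ and shift to $(\tfrac12)$, becomes $i\widehat h(-\log n/(2\pi))/\sqrt n$. Together these produce the prime power sum in the statement.

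For the other four pieces I would again shift from $(c)$ to $(\tfrac12)$. The only singularity crossed is at $s=1$ from $1/(s-1)$, whose residue against $H(s)+H(1-s)$ is $H(1)+H(0)=h(1/(2i))+h(-1/(2i))$, exactly the leading two terms in the statement. On $(\tfrac12)$ itself, the integral of $\bigl[1/s+1/(s-1)\bigr]\bigl[H(s)+H(1-s)\bigr]$ vanishes by parity, because along $s=\tfrac12+iu$ the rational factor equals $-2iu/(\tfrac14+u^2)$ (odd in $u$) while $H(s)+H(1-s)=h(u)+h(-u)$ is even. The constant term $-(\log\pi)/2$ integrated against $H(s)+H(1-s)$ on $(\tfrac12)$ yields $-\widehat h(0)\log\pi/(2\pi)$. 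Finally, the $\Gamma'/\Gamma(s/2)$ piece evaluates to $\frac{1}{4\pi}\int_{-\infty}^\infty [h(u)+h(-u)]\,(\Gamma'/\Gamma)(\tfrac14+iu/2)\,du$, which, after substituting $u\mapsto -u$ in the $h(-u)$ term and using $\overline{(\Gamma'/\Gamma)(\tfrac14+iu/2)}=(\Gamma'/\Gamma)(\tfrac14-iu/2)$ together with the reality of $h$ on $\mathbb R$, collapses to $\frac{1}{2\pi}\int_{-\infty}^\infty h(u)\,\Rep(\Gamma'/\Gamma)(\tfrac14+iu/2)\,du$.

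The main technical obstacle is justifying that the horizontal segments of the rectangular contour contribute $o(1)$ as $T\to\infty$: this requires selecting $T_k\to\infty$ avoiding ordinates of zeros such that $|(\xi'/\xi)(\sigma\pm iT_k)|$ grows at most polylogarithmically, uniformly in $1-c\leq\sigma\leq c$, which follows from the Hadamard product factorization of $\xi$ together with the density estimate $N(T+1)-N(T)=O(\log T)$ implicit in \eqref{N(T)}. Given this, the remaining steps---the contour shifts, the interchanges of sum and integral, and the calculus above---are routine consequences of the polynomial decay of $h$ and the absolute convergence of $\sum_n\Lambda(n)n^{-c}$ for $c>1$.
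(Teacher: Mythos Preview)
Your proof is correct and is precisely the standard contour-integration derivation of the Guinand--Weil explicit formula. The paper itself does not supply an independent argument: its entire proof is a one-line citation of \cite[Theorem~5.12]{IK}, together with the remark that the formula holds unconditionally if $h(\gamma)$ is replaced by $h((\rho-\tfrac12)/i)$. What you have written is essentially the proof one would find behind that citation, so there is nothing to compare---you have simply unpacked what the paper takes as a black box.

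One minor point worth making explicit in your write-up: after shifting the $\zeta'/\zeta$ contribution to the line $\Rep s=\tfrac12$, the convergence of the resulting prime sum $\sum_n \Lambda(n)n^{-1/2}\,\widehat h(\pm\log n/2\pi)$ relies on the exponential decay of $\widehat h$, which in turn follows from the analyticity of $h$ in the strip $|\Imp w|\le \tfrac12+\varepsilon$ by a contour shift in the Fourier integral. You allude to this only via ``the absolute convergence of $\sum_n\Lambda(n)n^{-c}$,'' which governs the interchange on $(c)$ but not the final sum on $(\tfrac12)$; it would be cleaner to state the decay $|\widehat h(\xi)|\ll e^{-\pi(1+2\varepsilon')|\xi|}$ directly.
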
 

\begin{proof}  The proof of this lemma follows from \cite[Theorem 5.12]{IK}. It can be stated unconditionally by replacing $h(\gamma)$ with $h((\rho{-}1/2)/i)$ in the sum over non-trivial zeros on the left-hand side of the above identity.
\end{proof}

%%%%%%%%%%%  Proof of Theorem 2  %%%%%%%%%%%%%%%%%%%%%

\section{Bounding $S_1(t)$: Proof of Theorem \ref{th2}}
As mentioned in the previous section, the proof of Theorem \ref{th2} requires appropriate majorant and minorant functions of exponential type for the function  $f_1(x) = 1 - x\arctan(1/x).$  The required properties of these extremal functions are described in the following lemma.  

\begin{lemma} \label{lem:minorant} 
Let $f_1(x) = 1 - x\arctan(1/x)$ and let $\Delta\ge 1$. Then there are unique real entire functions $g_{\Delta}^-: \mathbb{C} \rightarrow \mathbb{C}$ \ and \  $g_{\Delta}^+ : \mathbb{C} \rightarrow \mathbb{C}$ \ satisfying the following properties: 
\begin{enumerate}
\item[(i)] For real $x$ we have 
 \begin{equation}\label{ineq:boundforgDeltaxf1}
 \frac{-C }{1+x^2} \le g_{\Delta}^-(x) \le f_1(x) \le g_{\Delta}^+(x) \le \frac{C}{1+x^2}, 
\end{equation}
for some positive constant $C$.  Moreover, for any complex number 
 $z=x+iy$ we have 
 \begin{equation} \label{ineq:boundforgDeltazf1}
 \big|g_{\Delta}^\pm(z)\big| \ll  \frac{\Delta^2  }{(1\!+\! \Delta |z|)}e^{2\pi \Delta|y|}. \\
 \end{equation}
 
\item[(ii)]  The Fourier transforms of $g_{\Delta}^\pm$, namely 
 $$
 {\widehat g}_{\Delta}^\pm(\xi) = \int_{-\infty}^{\infty} g_{\Delta}^\pm(x) e^{-2\pi i x \xi } \ \! \dx,  
 $$ 
 are continuous functions supported on the interval $[-\Delta, \Delta]$ and satisfy 
 \begin{equation*}
 \big|{\widehat g}_{\Delta}^\pm(\xi)\big| \ll 1
 \end{equation*}
 for all $\xi \in [-\Delta, \Delta]$, where the implied constant is independent of $\Delta$. 
\smallskip
 
 \item[(iii)] The $L^1$-distances of $g_\Delta^\pm$ to $f_1$ are given by
\begin{equation*} \label{integral:lower}
 \int_{-\infty}^{\infty}  \big\{ f_1(x) - g_{\Delta}^-(x)\big\} \ \! \dx =  \int_{1/2}^{3/2} \frac{1}{\Delta} \Big\{\log\big(1 + e^{-2\pi \Delta (\sigma - 1/2)}\big)- \log \big(1 + e^{-2\pi\Delta}\big)\Big\} \ \! \dsigma
 \end{equation*}
and
\begin{equation*} \label{integral:upper}
\int_{-\infty}^{\infty}  \big\{g_{\Delta}^+(x) - f_1(x)\big\} \ \! \dx = -\int_{1/2}^{3/2} \frac{1}{\Delta} \Big\{\log\big(1 - e^{-2\pi \Delta (\sigma - 1/2)}\big)- \log \big(1 - e^{-2\pi\Delta}\big)\Big\} \ \! \dsigma.
\end{equation*}
 \end{enumerate}
 \end{lemma}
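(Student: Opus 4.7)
My plan is to construct $g_\Delta^\pm$ by ``averaging in $\sigma$'' a one-parameter family of extremal functions already provided by the Carneiro--Littmann--Vaaler Gaussian subordination machinery \cite{CLV}. The starting point is the integral representation for $f_1$ derived at the end of Lemma \ref{lemmafSt},
\begin{equation*}
f_1(x) = \frac{1}{2}\int_{1/2}^{3/2} F_\sigma(x)\,\dsigma, \qquad F_\sigma(x) = \log\left(\frac{1+x^2}{(\sigma-\tfrac{1}{2})^2+x^2}\right).
\end{equation*}
For each fixed $\sigma\in[\tfrac12,\tfrac32]$, the even function $F_\sigma$ is precisely of the type for which \cite{CLV} (extending \cite{CV2}) produces the unique real entire extremal majorant $L_\sigma^+$ and minorant $L_\sigma^-$ of exponential type $2\pi\Delta$, with explicit control on pointwise size, Fourier support, and $L^1$-distance. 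I would then define
\begin{equation*}
g_\Delta^\pm(x) := \frac{1}{2}\int_{1/2}^{3/2} L_\sigma^\pm(x)\,\dsigma,
\end{equation*}
and verify in turn each of the properties (i)--(iii).

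For (i), the fiberwise inequalities $L_\sigma^-(x)\le F_\sigma(x)\le L_\sigma^+(x)$ integrate to the middle inequalities of \eqref{ineq:boundforgDeltaxf1}. The outer bounds $|g_\Delta^\pm(x)|\ll 1/(1+x^2)$ follow from the uniform decay $F_\sigma(x)=O(1/x^2)$ together with the analogous decay that \cite{CLV} establishes for $L_\sigma^\pm-F_\sigma$ (these two together pin $L_\sigma^\pm$ between constant multiples of $1/(1+x^2)$, uniformly in $\sigma\in[\tfrac12,\tfrac32]$ for $\Delta\ge 1$). The complex estimate \eqref{ineq:boundforgDeltazf1} follows by substituting $z=x+iy$ into the pointwise upper bound that \cite{CLV} gives for $L_\sigma^\pm$ off the real line and integrating in $\sigma$.

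For (ii), Paley--Wiener applied to each $L_\sigma^\pm$ gives $\widehat{L_\sigma^\pm}$ supported in $[-\Delta,\Delta]$. Since the construction in \cite{CLV} depends continuously on the parameter $\sigma$, Fubini's theorem shows that $\widehat{g_\Delta^\pm}$ is also supported in $[-\Delta,\Delta]$, with $|\widehat{g_\Delta^\pm}(\xi)|\le \tfrac12\int_{1/2}^{3/2}|\widehat{L_\sigma^\pm}(\xi)|\,\dsigma\ll 1$ by the uniform bound on the Fourier transforms furnished by \cite{CLV}. For (iii), \cite{CLV} gives an explicit evaluation of $\int_{\R}\{F_\sigma-L_\sigma^-\}\,\dx$ and $\int_{\R}\{L_\sigma^+-F_\sigma\}\,\dx$ in closed form, of exactly the type $\frac{2}{\Delta}\{\log(1\pm e^{-2\pi\Delta(\sigma-1/2)})-\log(1\pm e^{-2\pi\Delta})\}$; interchanging the $\sigma$ and $x$ integrations (legitimate because the integrands are non-negative) and absorbing the factor of $\tfrac12$ produces the stated $L^1$-distance formulas.

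The genuine technical hurdle, and the step I expect to require the most care, is the uniqueness assertion. One must show that the $g_\Delta^\pm$ produced by averaging are in fact the unique real entire functions of exponential type $2\pi\Delta$ satisfying (i) and (iii). The natural route is to combine the Poisson summation / interpolation argument standard in the Beurling--Selberg theory (which turns any other candidate majorant/minorant into an equality case of the $L^1$ bound and forces it to interpolate $f_1$ at the same lattice of real nodes) with the fact that the fiberwise CLV extremals already saturate the corresponding inequalities for each $F_\sigma$. A secondary but important point is verifying the measurability and regularity in $\sigma$ of the CLV construction to justify Fubini throughout; this should be straightforward from the explicit Gaussian-subordination formulas in \cite{CLV}, but must be stated carefully.
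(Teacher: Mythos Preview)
Your approach is correct and closely parallels the paper's, though the paper organizes the argument slightly differently. Rather than averaging the fiberwise CLV extremals $L_\sigma^\pm$ over $\sigma$, the paper applies the CLV machinery \emph{once} directly to $f_1$: it writes $F_\Delta(x) := f_1(x/\Delta) = \int_0^\infty e^{-\pi\lambda x^2}\,\dnu_\Delta(\lambda)$ for a single non-negative finite Borel measure $\dnu_\Delta$ (itself defined by an integral over $\sigma\in[\tfrac12,\tfrac32]$), invokes \cite[Corollary~17]{CLV} to obtain the unique extremal minorant and majorant $G_\Delta^\pm$ of exponential type $2\pi$ for $F_\Delta$, and then sets $g_\Delta^\pm(z)=G_\Delta^\pm(\Delta z)$. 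Because the CLV construction is linear in the subordinating measure, Fubini's theorem shows that the paper's $g_\Delta^\pm$ coincide with your averaged functions $\tfrac12\int_{1/2}^{3/2}L_\sigma^\pm\,\dsigma$; in particular your ``genuine technical hurdle'' of uniqueness evaporates, since your averaged functions \emph{are} the CLV extremals for $f_1$ and inherit uniqueness directly from \cite{CLV}---no separate Poisson-summation or interpolation argument is needed. One small wrinkle in your formulation that the paper's route avoids: at the endpoint $\sigma=\tfrac12$ the function $F_\sigma(x)=\log\bigl((1+x^2)/x^2\bigr)$ is unbounded at $x=0$, so no entire majorant $L_{1/2}^+$ exists; you would have to treat the $\sigma$-integral for the majorant as an improper integral over $(\tfrac12,\tfrac32]$ and check that the (integrable) singularity in the $L^1$-distance causes no harm. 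The paper sidesteps this entirely since $f_1(0)=1$ is finite.
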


\smallskip

We postpone the proof of this lemma until Section \ref{Sec:Extremal}, and proceed with the proof of Theorem \ref{th2}.

\begin{proof}[Proof of Theorem \ref{th2}]
Let $h^\pm_\Delta(z)=g_\Delta^\pm(t-z)$ so that $\widehat{h}^\pm_\Delta(\xi)= \widehat{g}^\pm_\Delta(-\xi) e^{-2\pi i \xi t}.$ By Lemma \ref{lemmafSt} and (i) of Lemma \ref{lem:minorant}, we have for any positive $\Delta$ that
\begin{equation}\label{em0}
\frac{1}{4\pi} \log t - \frac{1}{\pi} \sum_\gamma h^+_\Delta(\gamma) + O(1)\leq S_1(t) \leq \frac{1}{4\pi} \log t - \frac{1}{\pi}\sum_\gamma h^-_\Delta(\gamma) + O(1).
\end{equation}
From (i) and (ii) of Lemma \ref{lem:minorant}, we find that 
\begin{equation}\label{em1}
\left|\widehat{h}^\pm_\Delta(0)\right| \ll 1,
\end{equation}
\begin{equation}\label{em2}
\left|h^\pm_\Delta\left(\pm\frac{1}{2i}\right) \right|\ll \frac{\Delta^2 e^{\pi \Delta}}{1+\Delta t },
\end{equation}
and that
\begin{equation}\label{em3}
\sum_{n=2}^{\infty} \frac{\Lambda(n)}{\sqrt{n}}\left(\widehat{h}^\pm_\Delta\left( \frac{\log n}{2\pi }\right) + \widehat{h}^\pm_\Delta\left( \frac{-\log n}{2\pi }\right)\right) \ll \sum_{n\leq e^{2\pi\Delta}} \frac{\Lambda(n)}{\sqrt{n}} \ll e^{\pi \Delta},
\end{equation}
where the last inequality follows from the Prime Number Theorem and summation by parts. Inserting estimates \eqref{em1}, \eqref{em2}, and \eqref{em3} into Lemma \ref{lem:explicitformula} gives us
\begin{equation}\label{em4}
\sum_\gamma h^\pm_\Delta(\gamma) = \frac{1}{2 \pi}  \int_{-\infty}^{\infty} h^\pm_\Delta(u) \ \! \text{\rm Re }\frac{\Gamma '}{\Gamma}\left(\frac{1}{4}   + \frac{iu}{2} \right) \du + O\left(e^{\pi \Delta} +1 \right) + O\left( \frac{\Delta^2 e^{\pi \Delta}}{1+\Delta t} \right).
\end{equation}
Using Lemma \ref{lem:minorant}, we estimate the integral on the right-hand side of the above expression separately for each of the functions $h^+_\Delta$ and $h^-_\Delta$.
\smallskip

Using Stirling's formula for $\Gamma'(s)/\Gamma(s)$, parts (i) and (iii) of Lemma \ref{lem:minorant}, and the identity
 \begin{equation}\label{pi}
 \int_{-\infty}^{\infty}f_1(x) \ \! \dx = \frac{\pi}{2},
 \end{equation}
it follows that
\begin{align} \label{useExplicit:Lower}
\begin{split} 
\frac{1}{2\pi }& \int_{-\infty}^{\infty} h^-_\Delta(u) \,\text{Re } \frac{\Gamma^{\prime}}{\Gamma} \left( 
 \frac{1}{4} + \frac{iu}{2} \right) \du 
 \\
 &= \frac{1}{2\pi} \int_{-\infty}^{\infty} g_\Delta^-(u) \Big(\log t + O\big(\log (2\!+\!|u|)\big)\Big) \> \du 
 \\
&= \frac{1}{2\pi} \int_{-\infty}^{\infty} \Big\{ f_1(u) - \big(f_1(u) -g_\Delta^-(u)\big)\Big\} \Big(\log t + O\big(\log (2\!+\!|u|)\big)\Big) \> \du 
\\
 &= \frac{1}{4} \log t - \frac{\log t}{2\pi \Delta} \int_{1/2}^{3/2} \left\{\log\big(1 + e^{-2\pi \Delta (\sigma - 1/2)}\big)- \log \big(1 + e^{-2\pi\Delta}\big)\right\} \dsigma + O(1)
 \\
 &\geq \frac{1}{4} \log t - \frac{\log t}{2\pi \Delta} \int_{1/2}^{\infty} \log\big(1 + e^{-2\pi \Delta (\sigma - 1/2)}\big) \>\dsigma + O\left(\frac{e^{-2\pi \Delta}\log t }{\pi\Delta}\right)+O(1)
 \\
 &= \frac{1}{4} \log t - \frac{\log t}{2\pi^2 \Delta^2} \int_{0}^{\infty} \log\big(1 + e^{-2\alpha}\big) \>\dalpha + O\left(\frac{ e^{-2\pi \Delta}\log t }{\pi\Delta}\right)+ O(1).
 \end{split}
 \end{align}
By \cite[\textsection 4.291, Formula 1]{GR}, we have
 \begin{equation}\label{em5}
 \int_0^\infty \log\big(1 \!+\! e^{-2\alpha} \big) \ \! \dalpha = \frac{1}{2} \int_0^1 \frac{\log(1\!+\!u)}{u} \ \! \du = \frac{\pi^2}{24}.
 \end{equation}
Therefore, by combining the estimates \eqref{em0}, \eqref{em4}, \eqref{useExplicit:Lower}, and \eqref{em5}, we see that
$$ S_1(t) \leq \frac{\log t}{48 \pi \Delta^2} + O\left(\frac{ e^{-2\pi \Delta}\log t }{\pi\Delta}\right)+ O\left(e^{\pi \Delta} +1 \right) + O\left( \frac{\Delta^2 e^{\pi \Delta}}{1+\Delta t} \right).$$
Choosing $\pi \Delta = \log \log t - 3 \log \log \log t$ in the above inequality, we obtain 
$$S_1(t) \leq \frac{\pi}{48} \frac{\log t}{(\log\log t)^2} + O\left( \frac{\log t \log\log\log t}{(\log\log t)^3} \right).$$
This proves the upper bound for $S_1(t)$ in Theorem \ref{th2}.
\smallskip

Again by Stirling's formula, parts (i) and (iii) of Lemma \ref{lem:minorant}, and (\ref{pi}), we see that
 \begin{align} \label{useExplicit:Upper}
 \begin{split} 
\frac{1}{2\pi }& \int_{-\infty}^{\infty} h^+_\Delta(u) \, \text{Re } \frac{\Gamma^{\prime}}{\Gamma} \left( 
 \frac{1}{4} + \frac{iu}{2} \right) \du 
 \\
 &= \frac{1}{2\pi} \int_{-\infty}^{\infty} g_\Delta^+(u) \Big(\log t + O\big(\log (2\!+\!|u|)\big)\Big) \> \du 
 \\
 &= \frac{1}{2\pi} \int_{-\infty}^{\infty} \Big\{ f_1(u) + \big(g_\Delta^+(u) - f_1(u)\big)\Big\} \Big(\log t + O\big(\log (2\!+\!|u|)\big)\Big) \> \du
 \\
 &= \frac{1}{4} \log t - \frac{\log t}{2\pi \Delta} \int_{1/2}^{3/2} \left\{\log\big(1 - e^{-2\pi \Delta (\sigma - 1/2)}\big)- \log \big(1 - e^{-2\pi\Delta}\big)\right\} \>\dsigma + O(1)
 \\
 &\leq \frac{1}{4} \log t - \frac{\log t}{2\pi \Delta} \int_{1/2}^{\infty} \log\big(1 - e^{-2\pi \Delta (\sigma - 1/2)}\big) \>\dsigma + O\left(\frac{e^{-2\pi \Delta}\log t }{\pi\Delta}\right)+O(1)
 \\
 &= \frac{1}{4} \log t - \frac{\log t}{2\pi^2 \Delta^2} \int_{0}^{\infty} \log\big(1 - e^{-2\alpha}\big) \>\dalpha + O\left(\frac{ e^{-2\pi \Delta}\log t }{\pi\Delta}\right)+ O(1).
 \end{split}
 \end{align}
By \cite[\textsection 4.291, Formula 2]{GR} we have
 \begin{equation}\label{em6}
  \int_0^\infty \log\big(1 \!-\! e^{-2\alpha} \big) \ \! \dalpha = \frac{1}{2} \int_0^1 \frac{\log(1\!-\!u)}{u} \ \! \du = -\frac{\pi^2}{12}.
  \end{equation}
By combining estimates \eqref{em0}, \eqref{em4}, \eqref{useExplicit:Upper}, and \eqref{em6}, it follows that
 $$ S_1(t)  \geq - \frac{\log t}{24 \pi \Delta^2} + O\left(\frac{ e^{-2\pi \Delta}\log t }{\pi\Delta}\right)+ O\left(e^{\pi \Delta} +1 \right) + O\left( \frac{\Delta^2 e^{\pi \Delta}}{1+\Delta t} \right).$$
Again, choosing $\pi \Delta = \log \log t - 3 \log \log \log t$, we obtain 
$$S_1(t) \geq -\frac{\pi}{24} \frac{\log t}{(\log\log t)^2} + O\left( \frac{\log t \log\log\log t}{(\log\log t)^3} \right).$$
This proves the required lower bound for $S_1(t)$, and therefore completes the proof of Theorem \ref{th2}.
\end{proof}

%%%%%%%%%%%  Theorem 2 implies Theorem 1 %%%%%%%%%%%%%%%%%%%%%

\section{Theorem \ref{th2} implies Theorem \ref{th1}}

In this section, we deduce Theorem \ref{th1} from Theorem \ref{th2} and the following lemma.\footnote{Maksym Radziwill (personal communication) has independently obtained a version of this lemma with the constants $\pm 1/4\pi$ replaced by $\pm 1/2\pi$.}

\begin{lemma}\label{lemma1}
Let $t\geq 2$ and $0< h \leq \sqrt{t}$. Then
$$-\frac{h \log t}{4\pi}+\frac{1}{h} \int_{t-h}^t S(u) \ \! \du + \vartheta_{h,t}^- \ \! \leq \ \! S(t) \ \! \leq \ \! \frac{h \log t}{4\pi}+ \frac{1}{h} \int_{t}^{t+h} S(u) \ \! \du + \vartheta_{h,t}^+ $$
where $ \vartheta_{h,t}^\pm = O( h)  + O(1/t).$
\end{lemma}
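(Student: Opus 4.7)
The plan is to exploit the monotonicity of the zero-counting function $N$ through the identity \eqref{N(T)}, which rearranges to $S(u) = N(u) - F(u) + O(1/u)$, where $F(u) := \frac{u}{2\pi}\log\frac{u}{2\pi} - \frac{u}{2\pi} + \frac{7}{8}$. Note that $F$ is smooth with $F'(u) = \frac{1}{2\pi}\log\frac{u}{2\pi}$ and $F''(u) \ll 1/u$, so Taylor's theorem gives
$$\int_{t}^{t+h} F(u)\,du \;=\; h F(t) + \tfrac{h^2}{2} F'(t) + O(h^3/t) \;=\; h F(t) + \tfrac{h^2}{4\pi}\log\tfrac{t}{2\pi} + O(h^3/t),$$
and under the hypothesis $h \leq \sqrt{t}$ the cubic error satisfies $h^3/t \leq h$.

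For the upper bound I would integrate $S(u) = N(u) - F(u) + O(1/u)$ over $[t, t+h]$. Since $N$ is non-decreasing, $\int_t^{t+h} N(u)\,du \geq h\, N(t)$. Combining this with the Taylor expansion above and with the rearranged identity $N(t) - F(t) = S(t) + O(1/t)$ yields
$$\int_t^{t+h} S(u)\,du \;\geq\; h\,S(t) - \tfrac{h^2}{4\pi}\log\tfrac{t}{2\pi} + O(h) + O(h/t).$$
Dividing by $h$, solving for $S(t)$, and absorbing the harmless $\log(2\pi)$ factor into an $O(h)$ error produces the required upper estimate for $S(t)$.

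The lower bound follows from the symmetric argument on $[t-h, t]$: here $N(u) \leq N(t)$ for $u \in [t-h,t]$, and Taylor's formula gives $\int_{t-h}^{t} F(u)\,du = hF(t) - \tfrac{h^2}{4\pi}\log\tfrac{t}{2\pi} + O(h^3/t)$. The sign of the quadratic correction flips, producing the term $-\frac{h}{4\pi}\log t$ after division by $h$ and rearrangement, which is exactly the desired lower bound.

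There is no serious obstacle: the argument is a clean monotonicity plus Taylor-expansion computation. The only mild care needed is in handling conventions at ordinates of zeros of $\zeta(s)$, but this is harmless since there are only countably many zeros, pointwise values do not affect the Lebesgue integrals $\int S(u)\,du$, and the inequality $N(u) \geq N(t)$ for $u \geq t$ (resp.\ $N(u) \leq N(t)$ for $u \leq t$) continues to hold whether one uses the right-continuous or averaged convention for $N(t)$.
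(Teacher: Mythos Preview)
Your argument is correct and follows essentially the same route as the paper: both exploit the monotonicity of $N$ together with the Riemann--von Mangoldt formula \eqref{N(T)}, with the only cosmetic difference being that the paper first derives the pointwise inequality $S(t)\le S(t+\nu)+\tfrac{\nu}{2\pi}\log t+O(\nu)+O(1/t)$ and then averages over $\nu\in[0,h]$, whereas you integrate the identity $S(u)=N(u)-F(u)+O(1/u)$ over $[t,t+h]$ directly and Taylor-expand $\int F$. The two computations are equivalent.
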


\begin{proof}
We first establish the upper bound for $S(t)$ in the lemma.  Let $0\leq \nu \leq \sqrt{t}$. Then, by (\ref{N(T)}), we have
$$ 0 \leq N(t\!+\!\nu)-N(t) = \frac{\nu}{2\pi}\log t +O(\nu) +O\left(\frac{1}{t}\right) + S(t\!+\!\nu)-S(t)$$
or, upon rearranging terms, 
$$ S(t) \leq S(t\!+\!\nu) + \frac{\nu}{2\pi}\log t +O(\nu) +O\left(\frac{1}{t}\right).$$
Now, for $0\leq h \leq \sqrt{t}$, it follows that
\begin{eqnarray*}
 S(t) &=& \frac{1}{h} \int_{0}^{h} S(t) \ \!\dnu 
 \\
 &\leq& \frac{1}{h} \int_{0}^h \left(  S(t\!+\!\nu) + \frac{\nu}{2\pi}\log t +O(\nu) +O\left(\frac{1}{t}\right)\right) \dnu 
 \\
 &=&  \frac{1}{h} \int_{t}^{t+h} S(u) \ \!\du +  \frac{h \log t}{4\pi} + O(h) +O\left(\frac{1}{t}\right),
 \end{eqnarray*}
as claimed.

The proof of the lower bound for $S(t)$ in the lemma is similar. Let $0\leq \nu \leq \sqrt{t}$ and observe that
$$ 0 \leq N(t)-N(t\!-\!\nu) = \frac{\nu}{2\pi}\log t +O(\nu) +O\left(\frac{1}{t}\right) + S(t)-S(t\!-\!\nu)$$
or, upon rearranging terms, that
$$ S(t) \geq S(t\!-\!\nu) -  \frac{\nu}{2\pi}\log t +O(\nu) +O\left(\frac{1}{t}\right).$$
Hence, for $0\leq h \leq \sqrt{t}$, we have
\begin{eqnarray*}
S(t) &=& \frac{1}{h} \int_{0}^{h} S(t) \ \!\dnu 
 \\
 &\geq& \frac{1}{h} \int_{0}^h \left(  S(t\!-\!\nu) - \frac{\nu}{2\pi}\log t +O(\nu) +O\left(\frac{1}{t}\right)\right) \dnu 
 \\
 &=&  \frac{1}{h} \int_{t-h}^{t} S(u) \ \!\du -  \frac{h \log t}{4\pi} + O(h) +O\left(\frac{1}{t}\right),
 \end{eqnarray*}
as claimed. This completes the proof of the lemma.
\end{proof}

%%%%%%%%%%%  Proof of Theorem 1  %%%%%%%%%%%%%%%%%%%%%

\begin{proof}[Proof of Theorem \ref{th1}]  
The proof of Theorem \ref{th1} relies on the estimate
\begin{equation}\label{ineq1}
 \big| S_1(t\pm h) \! - \! S_1(t) \big| \ \leq \ \frac{\pi}{16} \frac{\log t}{(\log\log t)^2} + O\left(\frac{\log t \log\log\log t}{(\log\log t)^3} \right)
\end{equation}
which, for $0<h \leq \sqrt{t}$, follows immediately from Theorem \ref{th2}. 
\smallskip

First we prove the upper bound for $S(t)$ implicit in Theorem \ref{th1}. Evidently,
$$ \int_{t}^{t+h} S(u) \ \!\du = S_1(t\!+\!h) \! - \! S_1(t) \leq \frac{\pi}{16} \frac{\log t}{(\log\log t)^2} + O\left(\frac{\log t \log\log\log t}{(\log\log t)^3} \right).$$
Therefore, by the upper bound in Lemma \ref{lemma1}, it follows that
$$ S(t) \leq \frac{\pi}{16 h} \frac{\log t}{(\log\log t)^2} + \frac{h}{4 \pi}\log t + O\left(\frac{\log t \log\log\log t}{h(\log\log t)^3} \right)+ O(h) +O\left(\frac{1}{t}\right).$$
Choosing $h= \frac{\pi}{2}\frac{1}{\log\log t}$, which minimizes the main term on the right-hand side of the above inequality, we deduce that
$$ S(t) \leq \frac{1}{4} \ \! \frac{\log t}{ \log \log t} +  O\left(\frac{\log t \log\log\log t}{(\log\log t)^2} \right).$$

The lower bound for $S(t)$ implicit in Theorem \ref{th1} can be established in a similar manner. By (\ref{ineq1}), we have
$$ \int_{t-h}^{t} S(u) \ \!\du = S_1(t) \! - \! S_1(t\!-\!h) \geq -\frac{\pi}{16} \frac{\log t}{(\log\log t)^2} + O\left(\frac{\log t \log\log\log t}{(\log\log t)^3} \right)$$
and hence
$$ S(t) \geq -\frac{\pi}{16 h} \frac{\log t}{(\log\log t)^2} - \frac{h}{4 \pi}\log t + O\left(\frac{\log t \log\log\log t}{h(\log\log t)^3} \right)+ O(h) +O\left(\frac{1}{t}\right)$$
using the lower bound in Lemma \ref{lemma1}. Choosing $h= \frac{\pi}{2}\frac{1}{\log\log t}$, which maximizes the main term on the right-hand side of the above inequality, it follows that
$$ S(t) \geq -\frac{1}{4} \ \! \frac{\log t}{ \log \log t} + O\left(\frac{\log t \log\log\log t}{(\log\log t)^2} \right).$$
The theorem now follows by combining these estimates.
\end{proof}

\noindent{\it Remark.} Using a different method, Fujii \cite{F} has shown that 
$$  (-0.50903 + o(1) ) \frac{\log t}{(\log \log t)^2} \ \leq \ S_1(t) \ \leq \ (0.31252 + o(1) ) \frac{\log t}{(\log \log t)^2}.$$
These bounds were stated, in a slightly different form, in the introduction. Using Lemma \ref{lemma1}, the method above, and Fujii's bounds for $S_1(t)$, it can be shown that $$ |S(t)| \leq (0.51138  + o(1)) \frac{\log t}{\log \log t},$$
which is almost as strong as the bound that Goldston and Gonek obtained in \cite{GG}.

%%%%%%%%%%%%   Direct proof of Theorem 2  %%%%%%%%%%%%%%%%%%%%%%%%

\section{Proof of Theorem \ref{th1} via extremal functions} \label{Sec:StviaExtremal}
In this section, we prove Theorem \ref{th1} using families of majorants and minorants of exponential type $2\pi \Delta$ for the function  
\begin{equation*}
f(x) = \arctan\left(\frac{1}{x}\right) - \frac{x}{1 + x^2}.
\end{equation*}
The properties of the extremal functions required in the proof are described in the next lemma.

\begin{lemma}
\label{lem:extremalfunctions}
Let $f(x) = \arctan\left(1/x\right) - x/(1 + x^2) $ and let $\Delta \geq 1$.  Then there are unique real entire functions $m_{\Delta}^-: \C \to \C$ and $m_{\Delta}^+: \C \to \C$ satisfying the following properties: 
\begin{itemize}
\item[(i)]  For all real $x$ we have 
 \begin{equation}\label{ineq:boundforgDeltax}
 \frac{-C}{1+x^2} \le m_{\Delta}^-(x) \le f(x) \leq m_{\Delta}^+(x) \leq  \frac{C}{1+x^2},
\end{equation}
for some positive constant $C$.  Moreover, for any complex number 
 $z=x+iy$ we have 
 \begin{equation} \label{exp_bound_g}
 \big|m_{\Delta}^{\pm}(z)\big| \ll  \frac{\Delta^2  }{(1\!+\! \Delta |z|)}e^{2\pi \Delta|y|}.
  \end{equation}

 \smallskip
 
 \item[(ii)]  The Fourier transforms of $m_{\Delta}^\pm$, namely 
\begin{equation*}
 {\widehat m}_{\Delta}^\pm(\xi) = \int_{-\infty}^{\infty}  m_{\Delta}^\pm(x)\, e^{-2\pi i x \xi }\,\dx,  
 \end{equation*}
 are continuous functions supported on the interval $[-\Delta, \Delta]$ and satisfy 
 \begin{equation*}
 \big|{\widehat m}_{\Delta}^\pm(\xi)\big| \ll 1
 \end{equation*}
 for all $\xi \in [-\Delta, \Delta]$, where the implied constant is independent of $\Delta$. 
 \smallskip
 
\item[(iii)]  The $L^1$-distances of $m^\pm_\Delta$ to $f$ are given by
 \begin{equation*}
  \int_{-\infty}^{\infty} \big\{m_{\Delta}^+(x) - f(x)\big\}\, \dx  = \int_{-\infty}^{\infty} \big\{f(x)-m_{\Delta}^-(x)\big\} \,\dx = \frac{\pi}{2\Delta} .\\
\end{equation*}
 \end{itemize}
\end{lemma}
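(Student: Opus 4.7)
The strategy is to represent $f$ as an integral superposition of simpler odd atoms and apply the Gaussian subordination extremal framework for odd functions of Carneiro and Littmann \cite{CL} to each atom, then integrate to construct $m_\Delta^\pm$. The first step is to derive an integral representation for $f$. From the formula
$$f_1(x) = \frac{1}{2}\int_{1/2}^{3/2}\log\frac{1+x^2}{(\sigma-1/2)^2+x^2}\,d\sigma$$
established in the proof of Lemma \ref{lemmafSt}, I would differentiate under the integral sign (justified by uniform convergence on compact sets away from $x=0$) and use the identity $f=-f_1'$ to obtain
$$f(x) \;=\; \int_0^1 \frac{x}{\lambda^2+x^2}\,d\lambda \;-\; \frac{x}{1+x^2}.$$
This exhibits $f$ as a difference of odd atoms $x\mapsto x/(\lambda^2+x^2)$ for $\lambda\in(0,1]$, each lying in the class treated by \cite{CL}.

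\textbf{Construction.} For each $\lambda\in(0,1]$ and $\Delta\geq 1$, the framework of \cite{CL} yields unique real entire functions $L_\Delta^{\pm}(\cdot,\lambda)$ of exponential type $2\pi\Delta$ that majorize (resp.\ minorize) the odd atom $x\mapsto x/(\lambda^2+x^2)$, and provides explicit formulas for the $L^1$-distance, Fourier support in $[-\Delta,\Delta]$, and pointwise bounds. I would then set
$$m_\Delta^+(x) := \int_0^1 L_\Delta^+(x,\lambda)\,d\lambda - L_\Delta^-(x,1), \qquad m_\Delta^-(x) := \int_0^1 L_\Delta^-(x,\lambda)\,d\lambda - L_\Delta^+(x,1).$$
By the pointwise inequalities $L_\Delta^-(x,\lambda)\leq x/(\lambda^2+x^2)\leq L_\Delta^+(x,\lambda)$, it is immediate that $m_\Delta^-(x)\leq f(x)\leq m_\Delta^+(x)$ on $\R$, and linearity shows each $m_\Delta^\pm$ is real entire of exponential type $2\pi\Delta$.

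\textbf{Property verification.} For (i), the uniform $O(1/(1+x^2))$ control of $L_\Delta^\pm(\cdot,\lambda)$ from \cite{CL} integrates in $\lambda$ to give the real-line bounds, while the complex estimate \eqref{exp_bound_g} follows from the analogous bound on each $L_\Delta^\pm(\cdot,\lambda)$ together with the Phragm\'en-Lindel\"of principle. For (ii), continuity and support of $\widehat{m}_\Delta^\pm$ in $[-\Delta,\Delta]$ are inherited from the atoms via Fubini, and the $O(1)$ bound on the Fourier transform reduces to the corresponding layerwise bound in \cite{CL}. The uniqueness of $m_\Delta^\pm$ as the Beurling-Selberg extremal majorants/minorants would follow from the layerwise uniqueness in \cite{CL} combined with the linearity of the representation, or equivalently from the equality case of the minimization inequality once the $L^1$-distance is pinned down.

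\textbf{Main obstacle.} The delicate point is property (iii): the identification of the exact value $\pi/(2\Delta)$ for the $L^1$-distance. This requires invoking the explicit closed forms in \cite{CL} for the $L^1$-distance of each atom $x/(\lambda^2+x^2)$, integrating in $\lambda\in[0,1]$, and tracking the cancellation against the term at $\lambda=1$. The cleanness of the final answer, in contrast with the logarithmic integrals of Lemma \ref{lem:minorant}(iii) for the even function $f_1$, is consistent with the oddness of $f=-f_1'$, in analogy with the classical Beurling-Selberg extremal $L^1$-distance $1/\Delta$ for $\sgn(x)$. Verifying this exact cancellation is the principal technical challenge of the proof.
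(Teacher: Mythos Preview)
Your decomposition has a structural flaw that prevents the argument from going through. The atoms $g_\lambda(x)=x/(\lambda^2+x^2)$ are smooth odd functions with $g_\lambda(0^+)=0$; in particular they cannot be written as a \emph{non-negative} superposition of odd Gaussians $\sgn(x)e^{-\pi\mu x^2}$ (any such superposition has value $\nu((0,\infty))>0$ at $0^+$), so they do not lie in the class to which the Carneiro--Littmann theorem applies. Thus the layerwise extremals $L_\Delta^\pm(\cdot,\lambda)$ you invoke are not furnished by \cite{CL}. Even if one obtained extremals for each $g_\lambda$ by other means, your combination
\[
m_\Delta^+ \;=\; \int_0^1 L_\Delta^+(\cdot,\lambda)\,d\lambda \;-\; L_\Delta^-(\cdot,1)
\]
is a \emph{signed} superposition: majorizing the positive part and minorizing the subtracted part produces \emph{a} majorant, but not the extremal one, and the $L^1$-errors add rather than cancel. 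So neither the uniqueness claim nor the exact value $\pi/(2\Delta)$ in (iii) can be obtained this way; your ``layerwise uniqueness $+$ linearity'' justification is incorrect precisely because linearity of the extremal construction fails for signed combinations.

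The paper avoids this by representing $f$ itself (after the rescaling $H_\Delta(x)=f(x/\Delta)$) directly as a \emph{non-negative} superposition of odd Gaussians, $H_\Delta(x)=\int_0^\infty \sgn(x)e^{-\pi\lambda x^2}\,d\mu_\Delta(\lambda)$, and verifies in an appendix that the density of $\mu_\Delta$ is non-negative. With a non-negative measure, \cite[Theorem 4]{CL} applies to $H_\Delta$ as a whole and yields unique extremals $M_\Delta^\pm$ of type $2\pi$ with explicit interpolation formulas. The $L^1$-distance then drops out almost for free: one checks that $M_\Delta^+(x)=J_\Delta(x)+\tfrac{\pi}{2}\bigl(\tfrac{\sin\pi x}{\pi x}\bigr)^2$ with $J_\Delta$ odd, so after rescaling $\int(m_\Delta^+-f)=\tfrac{\pi}{2\Delta}$, with the constant $\tfrac{\pi}{2}$ coming from the jump $f(0^+)=\pi/2$. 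The essential point you are missing is that one must subordinate $f$ directly with a non-negative measure, not decompose it into continuous atoms with a signed weighting.
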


We postpone the proof of Lemma \ref{lem:extremalfunctions} until the next section and proceed with the proof of Theorem \ref{th1}.

\begin{proof}[Proof of Theorem \ref{th1}]

Let $\Delta\geq 1$. By Lemma \ref{S(t)sum} and (i) of Lemma \ref{lem:extremalfunctions}, we observe that 
\begin{equation}\label{Sec2.6}
\frac{1}{\pi}  \sum_{\gamma} h^-(\gamma) + O(1) \ \! \leq  \ \! S(t)  \ \!  \leq  \ \! \frac{1}{\pi}  \sum_{\gamma} h^+(\gamma) + O(1)
\end{equation}
where $h^{\pm}(z) = m_{\Delta}^{\pm}(t - z)$ and thus $\widehat{h}^{\pm}(\xi) = \widehat{m}_{\Delta}^{\pm}(-\xi) e^{-2\pi i \xi t}$. From (i) of Lemma \ref{lem:extremalfunctions} we find that 
\begin{equation}\label{Sec2.7}
\left|h^\pm\left(\frac{1}{2i}\right)\right| + \left|h^\pm\left(-\frac{1}{2i}\right) \right|\ll \frac{\Delta^2 e^{\pi \Delta}}{(1+\Delta t)}.
\end{equation} 
 Using (ii) of  Lemma \ref{lem:extremalfunctions},  we have
 \begin{equation}\label{Sec2.8}
\left|\widehat{h}^\pm(0)\right| \ll 1.
 \end{equation}  
 Using Stirling's formula for $\Gamma'(u)/\Gamma(u)$, parts (i) and (iii) of Lemma \ref{lem:extremalfunctions}, and the fact that 
 $\int_{-\infty}^{\infty}f(x)\,\dx = 0$, it follows that
 \begin{align} \label{Sec2.9.5}
 \begin{split}
 & \frac{1}{2\pi } \int_{-\infty}^{\infty} h^\pm(u) \,\text{Re} \frac{\Gamma^{\prime}}{\Gamma} \left( 
 \frac{1}{4} + \frac{iu}{2} \right) \, \du \\
 & \ \ \ \ \ \ \ = \frac{1}{2\pi} \int_{-\infty}^{\infty} m_\Delta^\pm(u) \big(\log t + O(\log (2+|u|))\big) \, \du  
 \\
 & \ \ \ \ \ \ \  = \pm \frac{\log t}{4\Delta} + O(1).
 \end{split}
 \end{align}
Finally, using (ii) of  Lemma \ref{lem:extremalfunctions}, the sum over prime powers satisfies the inequality
\begin{align}\label{Sec2.10}
\begin{split}
\left|\frac{1}{2\pi}\sum_{n=2}^{\infty} \frac{\Lambda(n)}{\sqrt{n}}\left(\widehat{h}^\pm\left( \frac{\log n}{2\pi }\right) + \widehat{h}^\pm\left( \frac{-\log n}{2\pi }\right)\right)\right| \ll \sum_{n\leq e^{2\pi \Delta}} \frac{\Lambda(n)}{\sqrt{n}} \ll e^{\pi \Delta}.
\end{split}
\end{align}
Combining \eqref{Sec2.6} with the bounds \eqref{Sec2.7}, \eqref{Sec2.8}, \eqref{Sec2.9.5},  and \eqref{Sec2.10} gives 
\begin{equation*}\label{Sec2.11}
|S(t)| \leq \frac{\log t}{4 \pi \Delta} + O\left( e^{\pi \Delta} + 1\right) + O \left(\frac{\Delta^2 e^{\pi \Delta}}{1+\Delta t} \right).
\end{equation*}
Choosing $\pi \Delta = \log \log t - 3\log \log \log t$ in the above inequality, we obtain 
\begin{equation*}
|S(t)| \leq \ \frac{1}{4} \frac{\log t}{\log \log t} + O\left(\frac{\log t \log \log \log t}{(\log \log t)^2}\right),
\end{equation*}
which completes the proof of Theorem \ref{th1}.
\end{proof}

%%%%%%%%%%%%%%%%%%%   Extremal Functions  %%%%%%%%%%%%%%%%%%%%%%%%%%%

\section{Extremal functions} \label{Sec:Extremal}
In this section we discuss the extremal functions used in this paper, and in particular we prove Lemma \ref{lem:minorant} and Lemma \ref{lem:extremalfunctions}. Let us start with a brief description of the extremal problem and an account of its development.

\subsection{The Beurling-Selberg extremal problem} We say that an entire function $K:\C \to \C$ has {\it exponential type} at most $2\pi \Delta$ if, for every $\epsilon >0$, there exists a positive constant $C_{\epsilon}$, such that the inequality 
\begin{equation*}\label{intro0}
|K(z)| \leq C_{\epsilon} e^{(2\pi \Delta + \epsilon) |z|}
\end{equation*}
holds for all $z \in \C$. The extremal problem we are interested in here is the following: given a function $F: \R \to \R$, and a constant $\Delta >0$, we seek an entire function $K(z)$ of exponential type at most $2\pi\Delta$ such that the integral
\begin{equation}\label{BS1}
 \int_{-\infty}^{\infty} |F(x) - K(x)|\, \dx
\end{equation}
is minimized. This is a classical problem in harmonic analysis and approximation theory, considered by Bernstein, Akhiezer, Krein, Nagy, and others, dating back to at least 1938 (see for instance \cite{K,Na}). For applications to analytic number theory, it is convenient to consider a one-sided variant of this problem in which we ask additionally that $K(z)$ is real on $\R$ and that $K(x) \geq F(x)$ for all $x \in \R$. In this case, a minimizer of the integral \eqref{BS1} is called an extremal majorant of $F(x)$. Extremal minorants are defined analogously. Beurling independently started working on this extremal one-sided problem in the late 1930s, and obtained the solution for $F(x) = \sgn(x)$ and an inequality for almost periodic functions in an unpublished manuscript. Later, Selberg \cite{S2} recognized that the one-sided extremals for the signum function could produce majorants and minorants for characteristic functions of intervals, and used this fact to obtain a sharp form of the large sieve inequality. For a historical account of the early developments of this theory we refer to J. D. Vaaler's classical paper \cite{V}. Since here we are mainly interested in the one-sided version of this problem, we refer to it as the {\it Beurling-Selberg extremal problem}. 
\smallskip

The applications to number  theory rely heavily on the fact that these extremal functions have distributional Fourier transforms compactly supported in the interval $[-\Delta,\Delta]$, as a consequence of the Paley-Wiener theorem. An account of these applications must certainly include Hilbert-type inequalities \cite{CV2, GV, Lit, V}, Erd\"{o}s-Tur\'{a}n discrepancy inequalities \cite{CV2, LV, V}, optimal approximations of periodic functions by trigonometric polynomials \cite{CV2, CV3, V}, Tauberian theorems \cite{GV}, higher dimensional diophantine problems \cite{BMV, HV}, and more recently, bounds for the Riemann zeta-function under the Riemann Hypothesis \cite{CC, CS, GG}.
\smallskip

The subtle point of this theory is that given any $F:\R \to \R$ there is no general technique which is known to produce a solution of this extremal problem. 
There are, however, other examples of families for which the solution has been achieved. This includes the exponential functions $F(x) = e^{-\lambda |x|}$, $\lambda>0$, by Graham and Vaaler in \cite{GV}, and the odd and truncated power functions $F(x) = x^n \sgn(x)$ and $F(x) = (x^+)^n$, $n\in \N$, by Littmann in \cite{Lit, Lit2}. Later, using an exponential subordination, Carneiro and Vaaler in  \cite{CV2,CV3} were able to extend the construction of extremal approximations for a class of even functions that includes $F(x) = \log|x|$ and $F(x) = \log\bigl((x^2+4)/x^2 \bigr)$ (the latter function played an important role in the work of Chandee and Soundararajan \cite{CS}). Finally, the recent works \cite{CL,CLV} provide the latest tools to generate solutions of the Beurling-Selberg extremal problem for families of even, odd, and truncated functions via a certain Gaussian subordination. In particular, we shall find our extremal majorants and minorants for $f_1(x)$ and $f(x)$ in this framework, as described below.
\smallskip

%%%%%%%%%%%%%%%%%%%   Proof of Lemma 6  %%%%%%%%%%%%%%%%%%%%%%%%%%%

\subsection{Proof of Lemma \ref{lem:minorant}} In the work \cite{CLV}, Carneiro, Littmann, and Vaaler solve the Beurling-Selberg extremal problem for the class of even functions $g: \mathbb{R} \rightarrow \mathbb{R}$ given by
$$g(x) = \int_0^{\infty} e^{-\pi\lambda x^2} \> \dnu(\lambda), $$
where $\nu(\lambda)$ is a finite non-negative measure Borel measure on $(0, \infty)$. It turns out that our function $f_1(x) = 1 - x\arctan(1/x)$ is included in this class.
\smallskip

In fact, for $\Delta \geq 1$, we define the non-negative measure
$$ \dnu_{\Delta}(\lambda) := \int_{1/2}^{3/2}\left(\frac{e^{-\pi \lambda (\sigma - 1/2)^2\Delta^2} - e^{-\pi \lambda \Delta^2}}{2\lambda}\right)\, \dsigma \, \dl\,,$$
and set
 \begin{eqnarray*} \label{eqn:measureforfunction}
F_{\Delta}(x) &:=& \int_0^{\infty} e^{-\pi \lambda x^2} \> \dnu_{\Delta}(\lambda).
\end{eqnarray*}
From \new{\cite[Section 11]{CLV}} we have 
\begin{equation*} \label{eqn:Ax}
 A(x) := \frac{1}{2}\log \left( \frac{x^2 + \Delta^2}{x^2 + (\sigma - 1/2)^2\Delta^2}\right) = \int_0^{\infty} e^{-\pi\lambda x^2} \left(\frac{e^{-\pi \lambda (\sigma - 1/2)^2\Delta^2} - e^{-\pi \lambda\Delta^2}}{2\lambda} \right)\> \dl.
\end{equation*}
Integrating $A(x)$ from $\sigma=1/2$ to $\sigma=3/2$ , we derive that 
$$ F_{\Delta}(x) =  1 - \frac{x}{\Delta}\arctan\left(\frac{\Delta}{x}\right).$$
In particular, this shows that the measure $\dnu_\Delta(\lambda)$ is finite on $(0,\infty)$ since
$$ \int_0^{\infty} \dnu(\lambda) = F_{\Delta}(0) = 1,$$ 
and we observe that the function $f_1(x)$ in Lemma \ref{Sec:Extremal} satisfies
\begin{equation} \label{eqn:fF}
 f_1(x) =  F_{\Delta}(\Delta x).
\end{equation}

By \new{ \cite[Corollary 17]{CLV}}, there is both a unique extremal minorant $G_{\Delta}^-(z)$ and a unique extremal majorant $G_{\Delta}^+(z)$ of exponential type $2\pi$ for $F_{\Delta}(x),$ and these functions are of the form
\begin{equation*} \label{eqn:sumofGDelta}
G_{\Delta}^-(z) = \left( \frac{\cos \pi z}{\pi} \right)^2 \sum_{n=-\infty}^{\infty} 
 \left\{\frac{F_{\Delta}\bigl(n-\frac 12\bigr)}{\bigl(z-n+\frac 12\bigr)^2} + \frac{F_{\Delta}^{\prime}\bigl(n-\frac 12\bigr)}{\bigl(z-n+\frac 12\bigr)}\right\}
\end{equation*}
and 
\begin{equation*} \label{eqn:sumofMDelta}
G_{\Delta}^+(z) = \left( \frac{\sin \pi z}{\pi} \right)^2  
 \left\{ \sum_{n=-\infty}^{\infty} \frac{F_{\Delta}(n)}{(z-n)^2} + \sum_{n \neq 0} \frac{F_{\Delta}^{\prime}(n)}{(z-n)}\right\},
\end{equation*}
respectively. The observation in (\ref{eqn:fF}) suggests choosing
\begin{equation}\label{defofg}
g_{\Delta}^-(z) = G_{\Delta}^-(\Delta z) \,\,\,\,\,\, {\rm and} \,\,\,\,\,\,g_{\Delta}^+(z) = G_{\Delta}^+(\Delta z) 
\end{equation}
for the extremal functions $g_{\Delta}^\pm(z)$ in Lemma \ref{lem:minorant}.

\begin{figure}
\includegraphics[scale=.75]{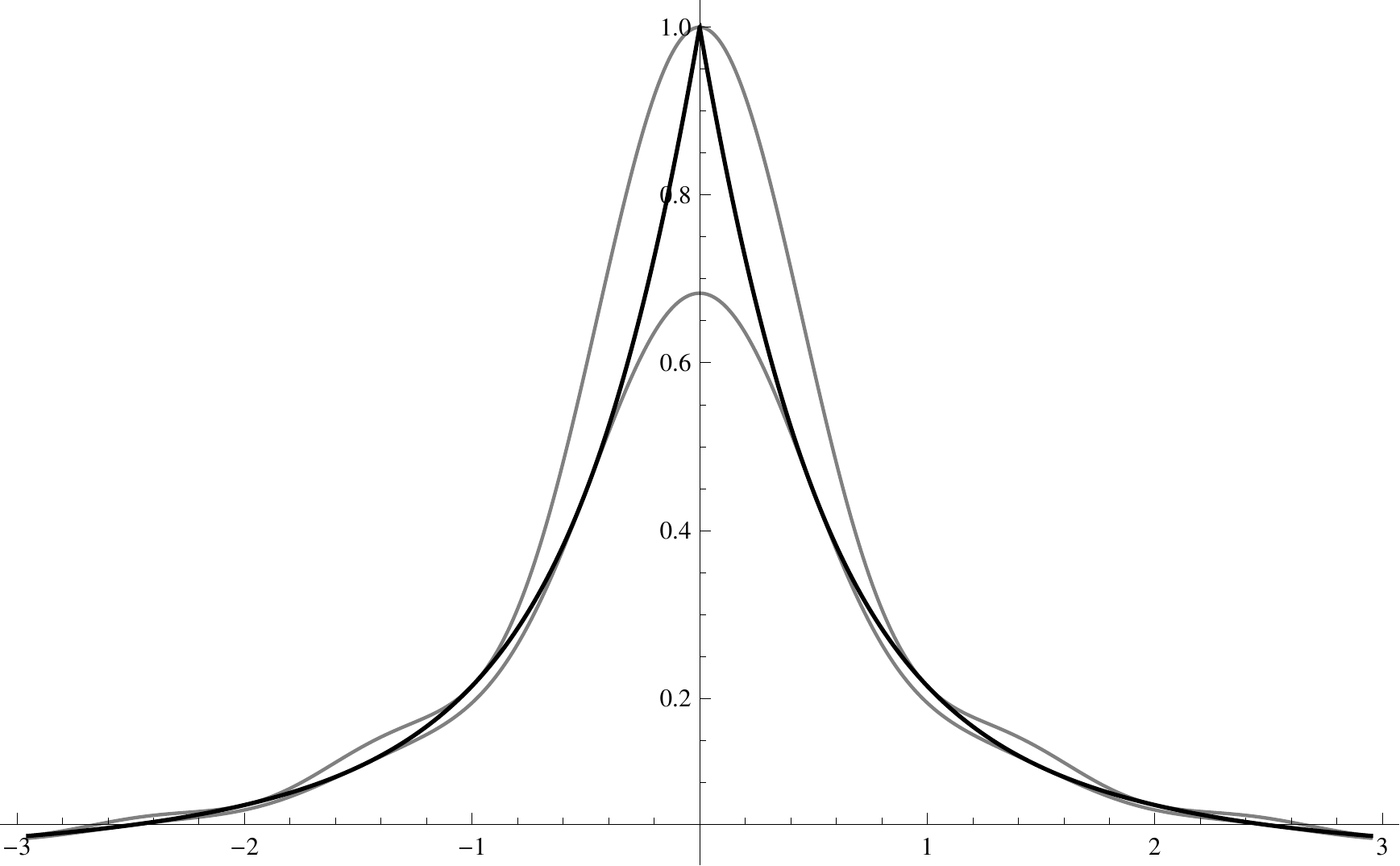} 
\caption{The function $f_1(x)=1-x\arctan(1/x)$ plotted with its real entire majorant and minorant of exponential type $2\pi$.}
\end{figure}

%%%%%%%%%%%%%%%%%%%   Proof of part (i) of Lemma 6  %%%%%%%%%%%%%%%%%%%%%%%%%%%

\subsubsection*{Proof of part (i) of Lemma \ref{lem:minorant}}  First, we show that there are constants $A$ and $B$ such that
\begin{equation} \label{ineq:f1}
|f_1(x)| \leq \frac{A}{1+ x^2}
\end{equation}
and 
\begin{equation} \label{ineq:fprime1}
|f'_1(x)| \leq \new{\frac{B}{|x|(1+x^2) }}.
\end{equation}
To prove both inequalities, we consider two cases: $|x| \leq 2$ and $|x| > 2.$ For the first case, both (\ref{ineq:f1}) and (\ref{ineq:fprime1}) hold since $|f_1(x)|$ and $|f'_1(x)|$ are bounded. For the second case, $|x|>2$, we write $f_1(x)$ and $f'_1(x)$ as power series 
\begin{equation}\label{eqn:powerseriesofF1}
f_1(x) = \sum_{n=1}^{\infty} (-1)^{n-1}\frac{1}{(2n + 1)x^{2n}} \quad \text{ and } \quad 
 f'_1(x) = \sum_{n = 1}^{\infty} (-1)^{n} \frac{2n}{(2n + 1)x^{2n + 1}}.
\end{equation}
It is not difficult to see that the bounds in (\ref{ineq:f1}) and (\ref{ineq:fprime1}) follow in this case from the series expansions in (\ref{eqn:powerseriesofF1}). 
\smallskip

We are now in position to establish (\ref{ineq:boundforgDeltazf1}).  Observe that 
\begin{equation} \label{eqn:sumofGDelta1}
G_{\Delta}^-(z) = \sum_{n=-\infty}^{\infty} \left(\frac{\sin \pi \bigl(z-n + \frac 12\bigr)}{\pi\bigl(z-n + \frac 12\bigr)}\right)^2 
 \left\{f_1\left(\frac{n-\tfrac 12}{\Delta}\right) + \frac{\bigl(z-n + \frac 12\bigr)}{\Delta} f_1^{\prime}\left(\frac{n-\tfrac 12}{\Delta}\right)\right\},
\end{equation}
and that
\begin{equation} \label{eqn:sumofGplusDelta}
 G_{\Delta}^+(z) = \sum_{n \neq 0}  \left( \frac{\sin \pi (z-n)}{\pi (z-n) }\right)^2 \left\{f_1\left(\frac{n}{\Delta}\right) +  \frac{(z-n)}{\Delta}f'_1\left(\frac{n}{\Delta}\right)\right\} + \left( \frac{\sin \pi z}{\pi z }\right)^2 .
\end{equation}
Moreover, for any complex number $\xi$ we have $|\sin (\pi \xi)/ (\pi \xi)|^2 \ll e^{2 \pi |{\rm Im} \xi|}/ (1 + |\xi|^2)$. Therefore from (\ref{ineq:f1}), (\ref{ineq:fprime1}), (\ref{eqn:sumofGDelta1}), and (\ref{eqn:sumofGplusDelta}), we can conclude that
\begin{equation*}
 |G_{\Delta}^\pm(x+iy)| \ll  \frac{\Delta^2  }{1+ |x+iy|}e^{2\pi |y|}.
\end{equation*}
This follows, for instance, by bounding trivially using absolute values and splitting the sums into $n\leq |z|/2$ and $n > |z|/2$. Hence from (\ref{defofg}) we arrive at (\ref{ineq:boundforgDeltazf1}).
\smallskip

Next, we establish (\ref{ineq:boundforgDeltaxf1}). From the choice of $g_{\Delta}^\pm$, for all $x$, we obtain that 
$$ g_{\Delta}^-(x) \leq f_1(x) \leq g_{\Delta}^+(x).$$
For a real number $x$, we have $f_1(x)\geq 0$, $f^{\prime}_1(-x) = -f^{\prime}_1(x)$, and $f_1'(x) \leq 0$ if $x>0$. Pairing the terms $n\geq 1$ and $1-n\leq0$ in the sum on the right-hand side of (\ref{eqn:sumofGDelta1}), we obtain that
\begin{align}\label{2boundforg}
\begin{split}
G_{\Delta}^-(x) &\geq \left( \frac{\cos \pi x}{\pi }\right)^2 \sum_{n=1}^{\infty} \frac{1}{\Delta} f_{1}^{\prime}\left(\frac{n-\tfrac 12}{\Delta}\right)\left\{ \frac{1}{\bigl(x - n + \tfrac 12\bigr)} - \frac{1}{\bigl(x + n - \tfrac12\bigr)}\right\}\\
& = \sum_{n=1}^{\infty} \frac{\sin^2 \pi\bigl(x -n +\tfrac12\bigr)}{\pi^2 \Bigl(x^2 - \bigl(n-\tfrac 12\bigr)^2\Bigr)}\, \frac{2 \bigl( n - \tfrac12 \bigr)}{\Delta}\, f_{1}^{\prime}\left(\frac{n-\tfrac 12}{\Delta}\right)\\
& \geq \sum_{(n-1/2)\leq |x|} \frac{\sin^2 \pi\bigl(x -n +\tfrac12\bigr)}{\pi^2 \Bigl(x^2 - \bigl(n-\tfrac 12\bigr)^2\Bigr)}\, \frac{2 \bigl( n - \tfrac12 \bigr)}{\Delta}\, f_{1}^{\prime}\left(\frac{n-\tfrac 12}{\Delta}\right),
\end{split}
\end{align}
Using (\ref{ineq:fprime1}) and (\ref{2boundforg}) \new{(splitting the sum into $n \leq |x|/2$ and $n> |x|/2$}), it follows that there is a constant $C$ such that 
\begin{equation*}
-C \frac{\Delta^2}{\Delta^2 + x^2} \leq G_{\Delta}^-(x)\,,
\end{equation*}
and thus from (\ref{defofg}) we derive the left most inequality of (\ref{ineq:boundforgDeltaxf1}).
\smallskip

For the right most inequality in (\ref{ineq:boundforgDeltaxf1}), we again use the fact that for real numbers $x$ we have $f_1(x) = f_1(-x)$, $f_1^{\prime}(-x) = -f_1^{\prime}(x)$,  and $f_1'(x) \leq 0$ if $x>0$. So pairing the terms $n\geq 1$ and $-n \leq - 1$ in the sum on the right-hand side of (\ref{eqn:sumofGplusDelta}), we see that
\begin{align}\label{2boundforM}
\begin{split}
G_{\Delta}^+(x) &=  \sum_{n =1}^{\infty}\left(\frac{\sin^2 \pi\bigl(x -n\bigr)}{\pi^2 \bigl(x^2 - n^2\bigr)^2}\right)f_1\left(\frac{n}{\Delta}\right) (2x^2 + 2n^2) + \left(\frac{\sin \pi x}{\pi x} \right)^2\\
& \ \ \ \ \ + \sum_{n=1}^{\infty} \frac{\sin^2 \pi\bigl(x -n \bigr)}{\pi^2 \bigl(x^2 - n^2\bigr)}\, \frac{2 n}{\Delta}\, f_1^{\prime}\left(\frac{n}{\Delta}\right)\\
& \leq  \sum_{n =1}^{\infty}\left(\frac{\sin^2 \pi\bigl(x -n\bigr)}{\pi^2 \bigl(x^2 - n^2\bigr)^2}\right)f_1\left(\frac{n}{\Delta}\right) (2x^2 + 2n^2) + \left(\frac{\sin \pi x}{\pi x} \right)^2\\
& \ \ \ \ \ + \sum_{n\geq |x|} \frac{\sin^2 \pi\bigl(x -n \bigr)}{\pi^2 \bigl(x^2 - n^2\bigr)}\, \frac{2 n}{\Delta}\, f_1^{\prime}\left(\frac{n}{\Delta}\right).\\
\end{split}
\end{align}
Using (\ref{ineq:f1}), (\ref{ineq:fprime1}) and (\ref{2boundforM}), it follows that there is a constant $C$ (\new{again one can break the sum into $n \leq |x|/2$ and $n> |x|/2$ to verify this}) such that
$$G_{\Delta}^+(x) \leq C \frac{\Delta^2}{\Delta^2 + x^2}.$$
Thus, from (\ref{defofg}), we deduce the upper bound for $g_{\Delta}^+(x)$  in (\ref{ineq:boundforgDeltaxf1}). This completes the proof of part (i) of Lemma \ref{lem:minorant}.

%%%%%%%%%%%%%%%%%%%   Proof of part (ii) of Lemma 6  %%%%%%%%%%%%%%%%%%%%%%%%%%%

\subsubsection*{Proof of part (ii) of Lemma \ref{lem:minorant}}  From the proof of part (i) of Lemma \ref{lem:minorant}, it follows that the functions $g_{\Delta}^\pm$ have exponential type $2\pi \Delta$ and are integrable  on $\mathbb{R}.$ Therefore, by the Paley-Wiener theorem, the Fourier transforms $\widehat{g}_{\Delta}^\pm$ are compactly supported on the interval $[-\Delta, \Delta]$. \new{Moreover, using  \eqref{ineq:boundforgDeltaxf1} we have
\begin{align*}
\begin{split}
\big|\widehat{g}_{\Delta}^{\pm}(\xi)\big| &= \left| \int_{-\infty}^{\infty} g_{\Delta}^\pm(x) e^{-2\pi i x \xi }\, \>\dx\right| \leq \int_{-\infty}^{\infty} \big|g_{\Delta}^\pm(x) \big|\,\dx \leq C \int_{-\infty}^{\infty} \frac{1}{1+x^2}\,\dx \ll 1.
 \end{split}
\end{align*}
}

%%%%%%%%%%%%%%%%%%%   Proof of part (iii) of Lemma 6  %%%%%%%%%%%%%%%%%%%%%%%%%%%

\subsubsection*{Proof of part (iii) of Lemma \ref{lem:minorant}} \new{From \cite[Section 11, Corollary 17 and Example 3]{CLV} we have
\begin{align*}
 &\int_{-\infty}^{\infty} \big\{ F_\Delta(x) - G_{\Delta}^-(x) \big\} \> \dx  =  \int_0^{\infty} \left\{ \sum_{\stackrel{n=-\infty}{n\neq0}}^{\infty} (-1)^{n+1}\, \lambda^{-1/2} \,e^{-\pi \lambda^{-1}n^2}\right\} \dnu_{\Delta}(\lambda)
  \\
& = \int_0^{\infty}\int_{1/2}^{3/2} \left\{ \sum_{\stackrel{n=-\infty}{n\neq0}}^{\infty} (-1)^{n+1}\, \lambda^{-1/2} \,e^{-\pi \lambda^{-1}n^2}\right\} \left(\frac{e^{-\pi \lambda (\sigma - 1/2)^2\Delta^2} - e^{-\pi \lambda \Delta^2}}{2\lambda}\right)\> \dsigma \, \dl\\
& = \int_{1/2}^{3/2} \int_0^{\infty}\left\{ \sum_{\stackrel{n=-\infty}{n\neq0}}^{\infty} (-1)^{n+1}\, \lambda^{-1/2} \,e^{-\pi \lambda^{-1}n^2}\right\} \left(\frac{e^{-\pi \lambda (\sigma - 1/2)^2\Delta^2} - e^{-\pi \lambda \Delta^2}}{2\lambda}\right)\>\dl\,  \dsigma\\
&=\int_{1/2}^{3/2} \log \left(\frac{1 + e^{-2\pi (\sigma - 1/2)\Delta}}{1 + e^{-2\pi\Delta}}\right)\,\dsigma,
\end{align*}
where the interchange of integrals is justified since the integrand is non-negative. In a similar way we have
\begin{align*}
 \int_{-\infty}^{\infty}& \big\{G_{\Delta}^+(x) -  F_\Delta(x) \big\} \> \dx  =  \int_0^{\infty} \left\{ \sum_{\stackrel{n=-\infty}{n\neq0}}^{\infty}  \lambda^{-1/2} \,e^{-\pi \lambda^{-1}n^2}\right\} \dnu_{\Delta}(\lambda)
  \\
& = \int_0^{\infty}\int_{1/2}^{3/2} \left\{ \sum_{\stackrel{n=-\infty}{n\neq0}}^{\infty} \lambda^{-1/2} \,e^{-\pi \lambda^{-1}n^2}\right\} \left(\frac{e^{-\pi \lambda (\sigma - 1/2)^2\Delta^2} - e^{-\pi \lambda \Delta^2}}{2\lambda}\right)\> \dsigma \, \dl\\
& = \int_{1/2}^{3/2} \int_0^{\infty}\left\{ \sum_{\stackrel{n=-\infty}{n\neq0}}^{\infty}  \lambda^{-1/2} \,e^{-\pi \lambda^{-1}n^2}\right\} \left(\frac{e^{-\pi \lambda (\sigma - 1/2)^2\Delta^2} - e^{-\pi \lambda \Delta^2}}{2\lambda}\right)\>\dl\,  \dsigma\\
&= - \int_{1/2}^{3/2} \log \left(\frac{1 - e^{-2\pi (\sigma - 1/2)\Delta}}{1 - e^{-2\pi\Delta}}\right)\,\dsigma.
\end{align*}
Part (iii) of Lemma \ref{lem:minorant} now follows from the change of variables (\ref{eqn:fF}) and \eqref{defofg}. 
}

%%%%%%%%%%%%%%%%%%%   Proof of Lemma 8  %%%%%%%%%%%%%%%%%%%%%%%%%%%

\subsection{Proof of Lemma \ref{lem:extremalfunctions}}  
In this section we discuss the extremal functions that play a role in proving Lemma \ref{lem:extremalfunctions}. This is based on the recent work of Carneiro and Littmann \cite{CL} in which they solve the Beurling-Selberg extremal problem for the truncated (and odd) Gaussian and extend the construction to a class of truncated (and odd) functions which includes the function $f(x)=\arctan(1/x)-x/(1+x^2)$. In the discussion below, we focus only on the construction for odd functions.
\smallskip

Let $\lambda >0$ be a parameter and define the odd Gaussian  $x \mapsto G_{\lambda}^o(x)$ by
\begin{equation*}
G_{\lambda}^o(x) = \sgn(x)\,e^{- \pi \lambda x^2}.
\end{equation*}
In order to prove Lemma \ref{lem:extremalfunctions}, we consider the odd function 
\begin{equation*}
H_{\Delta}(x) = \arctan \left(\frac{\Delta}{x}\right) - \frac{\Delta x}{x^2 + \Delta^2}\,,
\end{equation*}
and observe that, for $x>0$, we have
\begin{equation*}
H_{\Delta}(x) = \int_0^{\infty} e^{-\lambda \pi x^2} \dmu_{\Delta}(\lambda),
\end{equation*}
where the measure \new{$\dmu_{\Delta}(\lambda)$} is given by 
\begin{equation}\label{measure}
\dmu_{\Delta}(\lambda) = \left\{\int_0^{\infty} \frac{t}{2\sqrt{\pi \lambda^3}} \,e^{-\tfrac{t^2}{4\lambda}} \left( \frac{1}{t} \sin (\sqrt{\pi}\Delta t) - \Delta\sqrt{\pi} \cos (\sqrt{\pi} \Delta t)\right) \dt\right\}\dl.
\end{equation}
Moreover, the measure $ \dmu_{\Delta}(\lambda)$ is a non-negative and finite Borel measure on $(0,\infty)$. These facts about the measure $ \dmu_{\Delta}(\lambda)$ are proved in Appendix A at the end of this article.
\smallskip

Evidently,
\begin{equation}\label{rep f}
f(x)  = H_{\Delta}(\Delta x).
\end{equation}
By \new{\cite[Theorem 4]{CL}}, in its odd version, there is a unique extremal minorant $M_{\Delta}^-(z)$ of exponential type $2\pi$ for $H_{\Delta}(x)$ and a unique extremal majorant $M_{\Delta}^+(z)$ of exponential type $2\pi$ for $H_{\Delta}(x)$. We let
\begin{equation}\label{def_g_m}
m_{\Delta}^+(z) = M_{\Delta}^+(\Delta z) \ \ \  {\rm and} \ \ \  m_{\Delta}^-(z) = M_{\Delta}^-(\Delta z).
\end{equation}
From \cite{CL}, we also have the following representations for these extremal functions:
\begin{equation}\label{rep G}
M_{\Delta}^-(z) = \Big( \frac{\sin \pi z}{\pi }\Big)^2  \sum_{\stackrel{n= -\infty}{n\neq 0}}^{\infty} 
 \left\{\frac{H_{\Delta}\bigl(n\bigr)}{\bigl(z-n\bigr)^2} + \frac{{H_{\Delta}}^{\prime}\bigl(n\bigr)}{(z-n)} - \frac{{H_{\Delta}}^{\prime}\bigl(n\bigr)}{z}\right\} - \frac{\pi}{2}  \Big( \frac{\sin \pi z}{\pi z }\Big)^2
\end{equation}
and 
\begin{equation}\label{rep M}
M_{\Delta}^+(z) = \Big( \frac{\sin \pi z}{\pi }\Big)^2  \sum_{\stackrel{n= -\infty}{n\neq 0}}^{\infty} 
 \left\{\frac{H_{\Delta}\bigl(n\bigr)}{\bigl(z-n\bigr)^2} + \frac{{H_{\Delta}}^{\prime}\bigl(n\bigr)}{(z-n)} - \frac{{H_{\Delta}}^{\prime}\bigl(n\bigr)}{z}\right\}\! +\! \frac{\pi}{2}  \Big( \frac{\sin \pi z}{\pi z}\Big)^2\!.
\end{equation}
Observe that $M_{\Delta}^-(z)  = -M_{\Delta}^+(-z)$ and thus 
\begin{equation}\label{Sec3.1}
m_{\Delta}^-(z)  = -m_{\Delta}^+(-z).
\end{equation}
Therefore, from \eqref{Sec3.1} it suffices to prove the statements of Lemma \ref{lem:extremalfunctions} for $m_{\Delta}^+$.

\begin{figure}
\includegraphics[scale=.75]{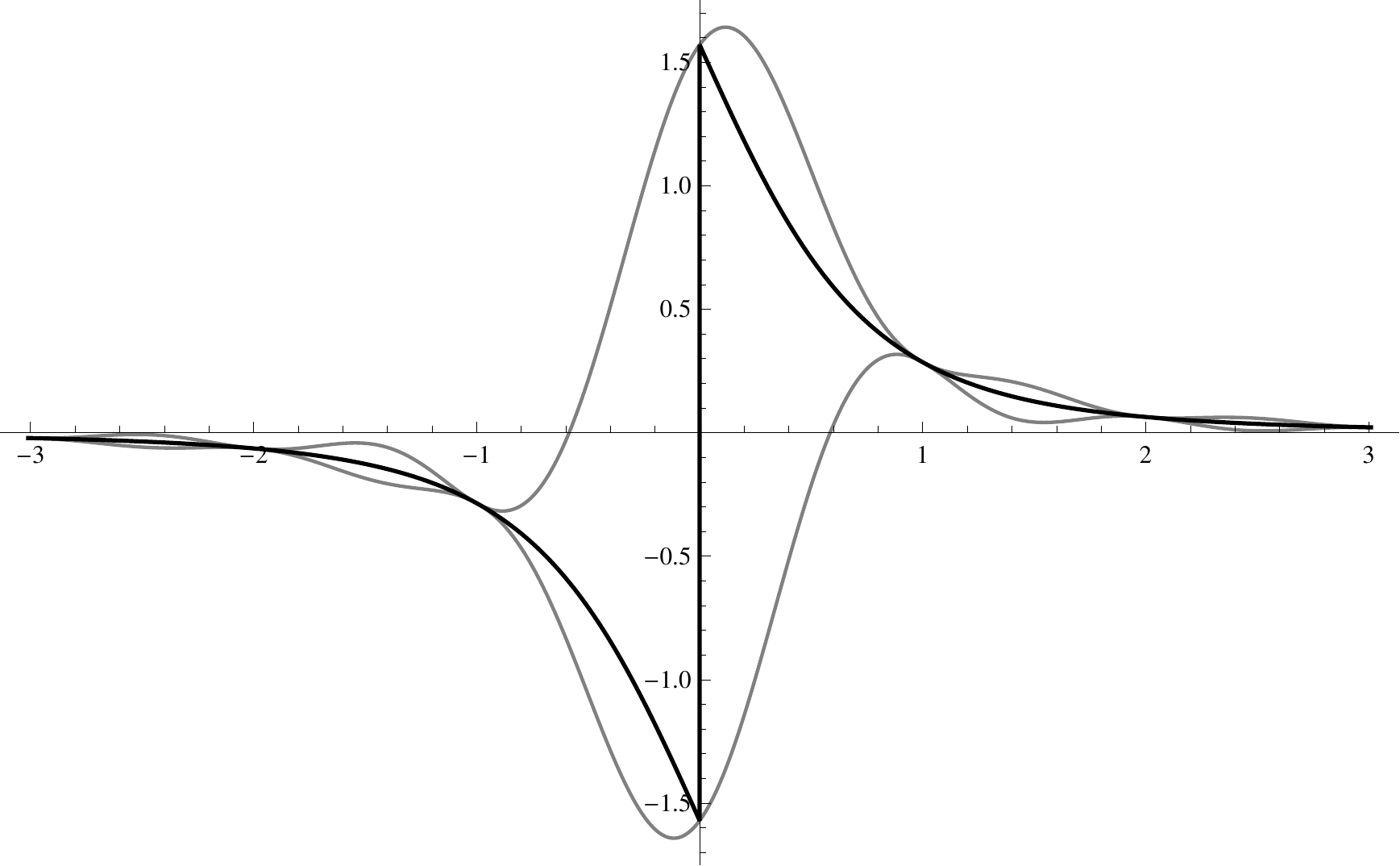} 
\caption{The function $f(x)=\arctan(1/x)-x/(1+x^2)$ plotted with its real entire majorant and minorant of exponential type $2\pi$.}
\end{figure}

%%%%%%%%%%%%%%   Proof of Part (i) of Lemma 8  %%%%%%%%%%%%%%%%%%%

\subsubsection*{Proof of part (i) of Lemma \ref{lem:extremalfunctions}}  From \eqref{rep f} and \eqref{rep M} we observe that 
\begin{align} \label{eqn:sumofGDelta1S_proof}
\begin{split}
M_{\Delta}^+(z)  = \sum_{\stackrel{n=-\infty}{n \ne 0}}^{\infty} &\left(\frac{\sin \pi \bigl(z-n \bigr)}{\pi\bigl(z-n \bigr)}\right)^2 
 \left\{f\left(\frac{n}{\Delta}\right) + \frac{\bigl(z-n \bigr)}{\Delta} f^{\prime}\left(\frac{n}{\Delta}\right)\right\} \\
 & \ \ \ \ \ \ \ \ \ \ \ \  - \sum_{\stackrel{n=-\infty}{n \neq 0}}^{\infty} \left(\frac{\sin \pi z}{\pi z}\right)^2  \frac{ z}{\Delta} f^{\prime}\left(\frac{n}{\Delta}\right)  + \frac{\pi}{2}  \left( \frac{\sin \pi z}{\pi z}\right)^2.
 \end{split}
\end{align}
Note that for any complex number $\xi$ we have $|\sin (\pi \xi)/ (\pi \xi)|^2 \ll e^{2 \pi |{\rm Im} \xi|}/ (1 + |\xi|^2)$. Next, we show that there is a constant $A>0$ such that
\begin{equation} \label{ineq:f}
|f(x)| \leq \frac{A}{1+ x^2},
\end{equation}
for all $x$. To prove (\ref{ineq:f}), we consider two cases: $|x| \leq 2$ and $|x| > 2.$ In the first case, $|f(x)|$ is bounded. In the second case, we write $f(x)$ as a power series
\begin{equation}\label{eqn:powerseriesofF}
f(x) = \sum_{n = 1}^{\infty} (-1)^{n - 1} \frac{2n}{(2n + 1)x^{2n + 1}}.
\end{equation}
Therefore (\ref{ineq:f})  follows from (\ref{eqn:powerseriesofF}) by the absolute convergence of this series. Moreover, notice that there is a constant $B >0$ such that for all real $x$ we have
\begin{equation} \label{ineq:fprime}
|f'(x)| = \left|\frac{-2}{(x^2+1)^2}\right|  \leq \frac{B}{|x|(x^2 + 1)}.
\end{equation}
From (\ref{eqn:sumofGDelta1S_proof}), (\ref{ineq:f}), and (\ref{ineq:fprime}), we can conclude that
\begin{equation*}
 \big|M_{\Delta}^+(x+iy)\big| \ll  \frac{\Delta^2  }{1+ |x+iy|}e^{2\pi |y|},
\end{equation*}
and hence from (\ref{def_g_m}) we derive (\ref{exp_bound_g}).

\smallskip

Now let $x$ be real. Since $f(x) = -f(-x)$ and $f'(x) = f'(-x)$ we can pair the terms $n$ and $-n$ in the sum 
\eqref{eqn:sumofGDelta1S_proof} to get
\begin{align}\label{rep_M1}
\begin{split}
M_{\Delta}^+(x)  & = \sum_{n=1}^{\infty} \left(\frac{\sin \pi \bigl(x-n \bigr)}{\pi\bigl(x^2-n^2 \bigr)}\right)^2 
 4 x n \, f\!\left(\frac{n}{\Delta}\right) + \sum_{n= 1}^{\infty}\frac{\sin^2 \pi \bigl(x-n \bigr)}{\pi^2\bigl(x^2-n^2 \bigr)} \frac{2x}{\Delta} f^{\prime}\!\left(\frac{n}{\Delta}\right)\\
 & \ \ \ \ \ \ \ \ \ \ \ \ \ \ - \sum_{n=1}^{\infty} \left(\frac{\sin \pi x}{\pi x}\right)^2  \frac{ 2x}{\Delta} f^{\prime}\!\left(\frac{n}{\Delta}\right) + \frac{\pi}{2}  \left( \frac{\sin \pi x}{\pi x}\right)^2\\
 & = \sum_{n=1}^{\infty} \left(\frac{\sin \pi \bigl(x-n \bigr)}{\pi\bigl(x^2-n^2 \bigr)}\right)^2 
 4 x n \, f\!\left(\frac{n}{\Delta}\right) + \sum_{n= 1}^{\infty}\frac{\sin^2 \pi \bigl(x-n \bigr)}{\pi^2\bigl(x^2-n^2 \bigr)x^2} \frac{2n^2x}{\Delta} f^{\prime}\!\left(\frac{n}{\Delta}\right)\\
 & \ \ \ \ \ \ \ \ \ \ \ \ \ \ + \frac{\pi}{2}  \left( \frac{\sin \pi x}{\pi x}\right)^2 \\
 &:= J_{\Delta}(x)  + \frac{\pi}{2}  \left( \frac{\sin \pi x}{\pi x}\right)^2.
 \end{split}
\end{align}
From \eqref{def_g_m}, we see that in order to prove \eqref{ineq:boundforgDeltax} it suffices to show that
\begin{equation}\label{bound_for_M_1}
|M_{\Delta}^+(x)| \ll \frac{\Delta^2}{\Delta^2 + x^2}.
\end{equation}
Since $\Delta\geq 1$, we observe that 
\begin{equation}\label{Sec3.10}
\frac{\pi}{2}  \left( \frac{\sin \pi x}{\pi x}\right)^2 \leq \frac{C}{1+x^2} \leq \frac{C \Delta^2}{\Delta^2 + x^2},
\end{equation}
for some constant $C$. Moreover, from \eqref{ineq:f} we also know that
\begin{equation}\label{Sec3.11}
|H_{\Delta}(x)| \leq \frac{A \Delta^2}{\Delta^2 + x^2}.
\end{equation}
Since 
\begin{equation*}
M_{\Delta}^+(x)  =  J_{\Delta}(x)  + \frac{\pi}{2}  \Big( \frac{\sin \pi x}{\pi x}\Big)^2 \geq H_{\Delta}(x)
\end{equation*}
for all real $x$, we conclude from \eqref{Sec3.10} and \eqref{Sec3.11} that there exists a constant $C$ such that 
\begin{equation*}
J_{\Delta}(x)  \geq - \frac{C \Delta^2}{\Delta^2 + x^2}.
\end{equation*}
On the other hand, for $x <0$, using that $f(\tfrac{n}{\Delta}) \geq 0$ and $f'(\tfrac{n}{\Delta}) \leq 0$ for $n\geq0$ in \eqref{rep_M1}, we obtain
\begin{equation*}
J_{\Delta}(x) \leq  \sum_{n= 1}^{|x|}\frac{\sin^2 \pi \bigl(x-n \bigr)}{\pi^2\bigl(x^2-n^2 \bigr)x} \frac{2n^2}{\Delta} f^{\prime}\!\left(\frac{n}{\Delta}\right) \leq  \frac{C \Delta^2}{\Delta^2 + x^2},
\end{equation*}
for some constant $C$. One can see this, for instance, by using \eqref{ineq:fprime} and splitting the sum in two parts: $n$ close to $x$ (say $n >x/2$) and $n$ small ($n < x/2$). Thus, we have proved that
\begin{equation}\label{ineqJ}
|J_{\Delta}(x) | \ll \frac{\Delta^2}{\Delta^2 + x^2}
\end{equation}
for $x <0$. Since $J_{\Delta}$ is an odd function, it follows that \eqref{ineqJ} holds for $x>0$ as well. This establishes \eqref{bound_for_M_1} and therefore completes the proof of part (i) of Lemma \ref{lem:extremalfunctions}.

%%%%%%%%%%%%%   Proof of (ii) and (iii) of Lemma 8  %%%%%%%%%%%%%%%%%%

\subsubsection*{Proof of parts (ii) and (iii) of Lemma \ref{lem:extremalfunctions}} From \eqref{def_g_m} and \eqref{rep_M1}, we see that 
\begin{equation*}
m_{\Delta}^+(x) = J_{\Delta}(\Delta x)  +  \frac{\pi}{2}  \left( \frac{\sin \pi \Delta x}{\pi \Delta x}\right)^2
\end{equation*}
where $J_{\Delta}$ is an odd function. Since $f(x)$ is odd and absolutely integrable, we have
\begin{equation*}
  \int_{-\infty}^{\infty} \big\{m_{\Delta}^+(x) - f(x)\big\}\, \dx =   \int_{-\infty}^{\infty} m_{\Delta}^+(x)\, \dx =  \int_{-\infty}^{\infty}   \frac{\pi}{2}  \left( \frac{\sin \pi \Delta x}{\pi \Delta x}\right)^2 \dx = \frac{\pi}{2\Delta}.
\end{equation*}
This proves part (iii) of Lemma \ref{lem:extremalfunctions}.
\smallskip 

The fact that the Fourier transform $\widehat{m}_{\Delta}^+$ is compactly supported on the interval $[-\Delta, \Delta]$ follows from the Paley-Wiener theorem since, by part (i), $m_{\Delta}^+$ has exponential type $2\pi \Delta$ and is integrable on $\R$. Moreover, using (\ref{ineq:boundforgDeltax}), we have
\begin{align*}
\big|\widehat{m}_{\Delta}^+(\xi)\big| &= \left| \int_{-\infty}^{\infty} m_{\Delta}^+(x) e^{-2\pi i x \xi }\, \dx\right| 
\leq \int_{-\infty}^{\infty} \left| m_{\Delta}^+(x) \right| \dx \ll \int_{-\infty}^{\infty} 
\frac{1}{1+x^2} \dx \ll 1.
\end{align*}
This completes the proof of part (ii), and thus establishes Lemma  \ref{lem:extremalfunctions}.

%%%%%%%%%%%%   Appendices  %%%%%%%%%%%%%%%%%%%%%%

\section{Appendix A}
In this appendix, we derive the properties of the measure $\dmu_{\Delta}(\lambda)$ the were used in the proof of Lemma  \ref{lem:extremalfunctions}.
\subsection*{A1} First we prove that, for all $x>0$,  
\begin{align}
\begin{split} \label{App1}
 H_{\Delta}(x) &= \arctan \left(\frac{\Delta}{x}\right) - \frac{\Delta x}{x^2 + \Delta^2} \\
& =   \int_0^{\infty} \int_0^{\infty} e^{-\pi \lambda x^2} \, \frac{t}{2\sqrt{\pi \lambda^3}} \, e^{-\tfrac{t^2}{4\lambda}} \left( \frac{1}{t} \sin (\sqrt{\pi}\Delta t) - \Delta\sqrt{\pi} \cos (\sqrt{\pi} \Delta t)\right) \dt \,\dl.
\end{split}
\end{align}
By making the change of variables $y=\sqrt{\pi}\Delta t$ and using Fubini's theorem, we see that the right-hand side of \eqref{App1} is equal to 
\begin{equation}\label{App2}
\int_0^{\infty} \left\{\int_0^{\infty} \frac{e^{-\pi \lambda x^2 - \tfrac{y^2}{4 \pi \Delta^2 \lambda}}}{2 \pi \Delta \lambda ^{3/2}} \,y\, \dl \right\} \left(\frac{\sin y}{y} -   \cos y \right)\, \dy.
\end{equation}
Call $W(x,y,\Delta)$ the quantity inside the brackets in \eqref{App2}. To prove \eqref{App1}, it suffices to show that  $W(x,y,\Delta) = e^{\tfrac{-xy}{\Delta}}$. For this, consider the change of variables $k=\tfrac{\sqrt{2 \pi \Delta x \lambda}}{ \sqrt {y}}$ which implies that
\begin{equation*}
W(x,y,\Delta) = \frac{\sqrt{2xy}}{\sqrt{\pi \Delta}} \,e^{-\tfrac{xy}{\Delta}}\, \int_0^{\infty} \frac{e^{-\tfrac{xy}{2\Delta} \big(k - \tfrac{1}{k}\big)^2}}{k^2}\, \dk.
\end{equation*}
Now from the symmetry $k \to \tfrac{1}{k}$, we can re-write the last expression as
\begin{equation*}
W(x,y,\Delta) = \frac{1}{2} \frac{\sqrt{2xy}}{\sqrt{\pi \Delta}} \,e^{-\tfrac{xy}{\Delta}}\, \int_0^{\infty} e^{-\tfrac{xy}{2\Delta} \big(k - \tfrac{1}{k}\big)^2} \left(1 + \frac{1}{k^2}\right)\, \dk.
\end{equation*}
Finally, from the change of variables $w=k - \tfrac{1}{k}$, we arrive at 
\begin{equation*}
W(x,y,\Delta) = \frac{1}{2} \frac{\sqrt{2xy}}{\sqrt{\pi \Delta}} \,e^{-\tfrac{xy}{\Delta}}\, \int_{-\infty}^{\infty} e^{-\tfrac{xy}{2\Delta} w^2}\, \dw = e^{\tfrac{-xy}{\Delta}}.
\end{equation*}
This proves \eqref{App1}.

\subsection*{A2} We now prove that the measure $\dmu_{\Delta}(\lambda)$ given by \eqref{measure} is non-negative. We do so by establishing that the density function 
\begin{equation*}
D(\Delta, \lambda) = \int_0^{\infty} \frac{t}{2\sqrt{\pi \lambda^3}} \,e^{-\tfrac{t^2}{4\lambda}} \left( \frac{1}{t} \sin (\sqrt{\pi}\Delta t) - \Delta\sqrt{\pi} \cos (\sqrt{\pi} \Delta t)\right) \dt
\end{equation*}
is non-negative for all $\lambda > 0$ and all $\Delta >0$. Again, we make the variable change $y=\sqrt{\pi}\Delta t $ and obtain that
\begin{equation*}
D(\Delta, \lambda) = \frac{1}{2 \pi \Delta \lambda ^{3/2}}\int_0^{\infty} e^{- \tfrac{y^2}{4 \pi \Delta^2 \lambda}}\, (\sin y -   y \cos y )\, \dy.
\end{equation*}
Setting $ \pi \Delta^2 \lambda =  a^2$, it suffices to prove that 
\begin{equation*}
\int_0^{\infty} e^{- \tfrac{y^2}{4a^2}}\, (\sin y -   y \cos y )\, \dy \geq 0
\end{equation*}
for all $a >0$. Using integration by parts and the Fourier transform of the odd Gaussian, we obtain that
\begin{equation*}
\begin{split}
 \int_0^{\infty} e^{- \tfrac{y^2}{4a^2}}\, (\sin y -   y \cos y )\, \dy &= \left\{(1+2a^2) \int_0^{\infty} e^{- \tfrac{y^2}{4a^2}}\,\sin y \, \dy \right\} - 2a^2\\
& = \left\{(1+2a^2) \,2a\, e^{-a^2}\, \int_0^a e^{w^2}\,\dw \right\} - 2a^2.
\end{split}
\end{equation*}
We are left to prove that 
\begin{equation*}
h(a) = \int_0^a e^{w^2} \dw - \frac{a\, e^{a^2}}{1+2a^2} \geq 0
\end{equation*}
for all $a \geq 0$. This follows from observing that $h(0) = 0$ and 
\begin{equation*}
h'(a) = e^{a^2} \left( \frac{4a^2}{(1+2a^2)^2}\right) \geq 0
\end{equation*}
for all $a\geq0$. This concludes the proof.

\subsection*{A3} To verify that $\dmu_{\Delta}(\lambda)$ is indeed a finite measure on $(0,\infty)$, we note that \eqref{App1} and the monotone convergence theorem imply that
\begin{equation*}
\int_0^{\infty} \dmu_{\Delta}(\lambda) = \lim_{x \to 0^{+}} \int_0^{\infty} e^{-\pi \lambda x^2} \, \dmu_{\Delta}(\lambda)  = \lim_{x \to 0^{+}} \left\{\arctan \left(\frac{\Delta}{x}\right) - \frac{\Delta x}{x^2 + \Delta^2}\right\} = \frac{\pi}{2}.
\end{equation*}

\section*{Acknowledgments}
The authors would like to thank Kannan Soundararajan for his encouragement and for some valuable discussions, and Peter Sarnak for pointing out some of the references at early stages of the project. We would also like to thank J. Brian Conrey and Jeffrey D. Vaaler for helpful discussions on the topics presented here.

\bibliographystyle{amsplain}

\end{document}